\xpatchcmd{\paragraph}{\normalfont}{{\normalfont\itshape}}{}{}
\newtheorem{theorem}{Theorem}[section]
\newtheorem{lemma}{Lemma}[section]
\theoremstyle{definition}
\newtheorem{definition}{Definition}[section]
\newtheorem{remark}{Remark}[section]
\newcommand{\lp}{\left(}
\newcommand{\rp}{\right)}
\newcommand{\lc}{\left\{}
\newcommand{\rc}{\right\}}
\newcommand{\der}{\partial}
\newcommand{\R}{\mathbb{R}}      
\newcommand{\N}{\mathbb{N}}      
\newcommand{\C}{\mathbb{C}}      
\newcommand{\Flder}{\rightarrow}
\newcommand{\proa}{A^*G \mbox{$\;$}_{\tau^*} \kern-3pt\times_\alpha
G \mbox{$\;$}_\beta \kern-3pt\times_{\tau^*} A^*G}
\newcommand{\Sc}{\mathcal{S}^{^\text{cons}}}
\newcommand{\Sf}{\mathcal{S}^{^\text{frac}}}
\newcommand{\Sg}{\mathcal{S}}
\begin{document}

\title{Fractional variational integrators based on convolution quadrature}

\begin{abstract}
Fractional dissipation is a powerful tool to study non-local physical phenomena such as damping models. The design of geometric, in particular, variational integrators for the numerical simulation of such systems relies on a variational formulation of the model. In \cite{JiOb2}, a new approach is proposed to deal with dissipative systems including fractionally damped systems in a variational way for both, the continuous and discrete setting. It is based on the doubling of variables and their fractional derivatives. The aim of this work is to derive higher-order fractional variational integrators by means of convolution quadrature (CQ) based on backward difference formulas.  We then provide numerical methods that are of order 2 improving a previous result in \cite{JiOb2}. The convergence properties of the fractional variational integrators and saturation effects due to the approximation of the fractional derivatives by CQ are studied numerically.
\end{abstract}

\subjclass[2010]{26A33,37M99,65P10,74H15,70H25,70H30.}

\keywords{Convolution quadrature, fractional derivatives, fractional dissipative systems, fractional differential equations, variational principles, variational integrators.}

\author{Khaled Hariz Belgacem}

\email{hariz@math.upb.de}

\address{Department of Mathematics, University of Paderborn, Warburger Straße 100, 33098 Paderborn,
Germany}

\author{Fernando Jim\'enez}
\address{Departamento de Matemática Aplicada I, ETSII, Universidad Nacional de Educación a Distancia (UNED)
c. Juan del Rosal 12, 28040, Madrid, Spain.}

\email{fjimenez@ind.uned.es}

\author{Sina Ober-Bl\"obaum}
\email{sinaober@math.upb.de}
\address{Department of Mathematics, University of Paderborn, Warburger Straße 100, 33098 Paderborn,
Germany}

\maketitle

\section{Introduction}
Fractional calculus is an extension of classical integration and differentiation theory to any real or complex order \cite{TheBook,TheBook2,Ignor,Oldham}. A major feature enjoyed by fractional calculus is non-locality which is used widely  to model numerous  phenomena in mechanics and physics. Fractional damping systems including dissipation can be considered as a variational problem  with a  Lagrangian which depends on fractional derivatives, so that the corresponding equations of motion arise from  Hamilton's principle. In this context, a new variational approach, the so-called {\em restricted Hamilton's principle} was developed by Jim\'enez and Ober-Bl\"obaum \cite{JiOb2}. Contrary to the classical Hamilton's principle,  the restricted one gives only {\em sufficient} conditions for the extremals of the fractional variational problem leading to {\em  restricted fractional Euler-Lagrange equation:}
\begin{equation}\label{eq:FV}
    \frac{d}{dt}\lp\frac{\der L}{\der\dot x}\rp-\frac{\der L}{\der x}=-\mu\,D^{\alpha}_{-}x,\quad \mu >0,\quad \alpha \geq 0
\end{equation}
where $L(t,x,\dot x)$ is a Lagrangian and $D_-^{\alpha}$ is the fractional derivative operator.\\  

There are several interesting applications of the fractional dynamical equation \eqref{eq:FV}. For example, it  can be used to describe the dynamics of damped linear system when $\alpha=1$, i.e.~ the fractional derivative becomes the full derivative or the motion of rigid plate immersed in a Newtonian fluid for fractional  derivatives of order $3/2$ \cite{Torvik}.\\

Dealing with variational problems permit us to construct variational integrators \cite{MaWe,LuGeWa} via a discrete calculus of variations which are  numerical schemes for Lagrangian systems preserving their variational
structures. For this purpose, it is important to derive of the equations of motion for forced systems (equation \eqref{eq:FV}  with $\alpha=1$) in a purely variational way. There exist several attempts in this direction, some of those based on duplicating the variables of the system \cite{Galley,Diego}.
The construction of the desired forced variational  integrators was motivated by \cite{Diego}.
. \\

The notion of variational integrators to the fractional case has been discussed \cite{JiOb1} using a discrete restricted Hamilton's principle. This approach  based on the Grünwald-Letnikov approximation of the fractional  derivatives, is also developed in \cite{JiOb2}.  Such approximation  has been proved to be of order one consistency, so that   the convergence order the resulting scheme, called {\em Fractional Variational Integrator} (FVI)  is then limited by  $1$. Following the previous work  \cite{JiOb2}, our purpose is to derive high-order  variational integrators for \eqref{eq:FV}  by combining  high-order variational techniques  \cite{SinaSaake,O14}  with {\em convolution quadrature} (CQ) \cite{Lubich1,Lubich2}.\\

Among several  numerical methods used in the fractional framework,  convolution quadrature preserves structure. More concretely,  there are two  important properties of fractional operators used in the restricted Hamilton's principle: integration by parts and semigroup properties. For the construction of FVI, preserving such properties at the discrete setting  is then essential which can be done by CQ.\\

Convolution quadrature was introduced by  Lubich \cite{Lubich1,Lubich2}. This method  is a numerical tool for  approximating convolution integral  by a specific quadrature rule.  The main difference between this method and  other numerical methods is that the  weights of CQ are computed by Laplace transform of the convolution kernel and  multistep methods. For  the {\em left} fractional integral, which is a particular convolution integral,  the quadrature weights are obtained from the fractional order power of the rational polynomial of the generating functions of LMMs.  In particular,  the use of backward difference  formulas (BDFs) is a subclass of LMMs which is widely adopted for high accuracy \cite{LubichF1,LubichF2}.\\

The  FVIs is mainly a combination of two different algorithms: one for fractional part and the other for conservative part.  As we deal with higher order approximation, the above listed strategy is appropriate for the fractional derivative involved in \eqref{eq:FV}. Besides that, it is also natural to apply  higher order approximation for the conservative part and this  motivates us to  use high-order variational techniques.\\

High-order variational techniques, also known as high-order variational integrators or Galerkin variational methods are
 numerical approaches applied to the action integral associated to a Lagrangian $L$ in order to construct numerical schemes of arbitrarily high order. It is based on interpolating the trajectories and choosing  a high-order quadrature for the approximation of the integral, see  \cite{MaWe,Leok2011,SinaSaake} and references therein.\\

The work is organized as follows. In Section \ref{sec:higher-orderDVM} we  give a brief exposition of Lagrangian variational integrators that  will be used throughout the work. Section \ref{FracIntegrals} contains some necessary preliminaries of fractional calculus,  and we present  the notion of convolution quadrature, in particular, Lubich's fractional linear multi-step methods. We also discuss the main issue of using convolution quadrature together based on BDFs  with numerical experiments as illustration.  After recalling a continuous restricted Hamilton's principle in Section \ref{sec:RVP}, we present our new contribution as a generalisation of the one obtained by \cite{JiOb2} for deriving FVIs associated to \eqref{eq:FV}  by means of the convolution quadrature in the framework of the discrete restricted Hamilton's principle.  We examine the accuracy of such integrators using the damped oscillator and the Bagley-Torvik problems. The final Section we treat the case higher order FVIs by applying  high-order techniques as presented in \cite{SinaSaake,O14} mixed with convolution quadrature to obtain higher order fractional variational integrators.


\section{Higher order discrete variational mechanics}\label{sec:higher-orderDVM}

In this section, we remind the construction of variational integrators  which will be used in this work. We refer
to \cite{MaWe,Leok2011,SinaSaake} and references therein for more details.

\subsection{Hamilton's principle and Euler-Lagrange equations}

Consider a mechanical system defined on the $d$-dimensional configuration manifold $Q$ (later on we will particularize on $\R^d$, but in this section it can be considered as a general smooth manifold) with corresponding tangent bundle $TQ$. Let $q(t)\in Q$ and $\dot q(t)\in T_{q(t)}Q$, $t\in [0,T]\subset \R$, $0<T$,  be the time-dependent configuration and velocity of the system. The action $S:C^2([0,T],Q)\Flder\R$ of a mechanical system is defined as the time integral of the Lagrangian, i.e.~
\begin{equation}\label{ContAc}
\mathcal{L}(q)=\int_0^TL(q(t),\dot q(t))\,dt,
\end{equation}
where the $C^2$ Lagrangian function $L:TQ\Flder\R$ consists of kinetic minus potential energy. Hamilton's principle seeks curves $q$, with fixed initial and final values $q(0)$ and $q(T)$, which are {\it extremals} of the action,  i.e.~ satisfying
\[
\delta\, \mathcal{L}(q)=0,
\]
for arbitrary variations $\delta q\in T_qC^2([0,T],Q).$ A necessary and sufficient condition for the extremals is the so-called Euler-Lagrange equation 
\begin{equation}\label{EL}
\frac{d}{dt}\lp\frac{\der L}{\der\dot q}\rp-\frac{\der L}{\der q}=0,
\end{equation}
which are a second-order differential equation and describes the dynamics of conservative systems. See \cite{AbMa} for more details.

\subsection{Discrete Hamilton's principle and discrete Euler-Lagrange equations}\label{Disc}
The discretization of the objects described in the previous subsection is based on \cite{MaWe,MoVe}. Let us consider a time grid $t_k=\lc k\,h\,| k=0,\ldots, N\rc$, where $h\in\R_+$ is the time step and $h\,N=T$. We replace the configuration $q(t)$ by a discrete sequence $q_d\equiv\lc q_k\rc_{0:N}\in Q^{N+1}$ where $q_k$ will be an approximation of $q(t_k)$. The discrete Lagrangian $L_d:Q\times Q\Flder\R$ will be an approximation of the action \eqref{ContAc} in one time step $[t_k,t_{k+1}]$ based on two neighboring configurations $q_k$ and $q_{k+1}$, i.e.~
\begin{equation}\label{TwoPointsApprox}
L_d(q_k,q_{k+1})\approx \int_{t_k}^{t_{k+1}}L(q(t),\dot q(t))\,dt.
\end{equation}
Furthermore, the discrete action sum $\mathcal{L}_d:Q^{N+1}\Flder\R$ is defined by
\[
\mathcal{S}_d(q_d)=\sum_{k=0}^{N-1}L_d(q_k,q_{k+1}).
\]
The discrete Hamilton's principle seeks extremals of the action $\mathcal{L}_d$ with fixed endpoints $q_0,\,q_N$, i.e.~
\[
\delta \mathcal{S}_d(q_d)=0,
\]
for arbitrary $\delta q_k\in T_{q_k}Q$. A necessary and sufficient condition for the extremals are the discrete Euler-Lagrange equations
\begin{equation}\label{DEL}
D_1L_d(q_k,q_{k+1})+D_2L_d(q_{k-1},q_{k})=0,\quad\quad k=1,\ldots, N-1,
\end{equation}
where $D_i$ is the derivative with respect to the $i$-th argument. Given that the matrix $D_{12}L_d(q_k,q_{k+1})$ is regular, equation \eqref{DEL} provides a discrete iteration scheme for \eqref{EL} that determines $q_{k+1}$ for given $q_{k}$ and $q_{k-1}$. This iteration scheme, that can be represented by the map  $F_{L_d}:Q\times Q\Flder Q\times Q$, $(q_{k-1},q_k)\mapsto (q_k,q_{k+1})$, is called {\it variational integrator}, and has interesting preservation properties, such as symplecticity and momentum preservation under the action of a symmetry \cite{MaWe,MoVe}.

\subsection{Higher order approximations of the action}\label{HO-Action}

Considering only two neighboring configurations $q_k$ and $q_{k+1}$ in \eqref{TwoPointsApprox} limits the  approximation order to $O(h^2)$. With the aim of increasing this order, a well-known approach is to take into account inner nodes in between $[t_k, t_{k+1}]$ \cite{MaWe,SinaSaake,HaLe13}. This higher order approximation procedure consists of two steps:  (1) the approximation of the space of trajectories and (2) the approximation of the integral of the Lagrangian by appropriate quadrature rules.
\medskip

\paragraph{(1) Trajectories  space} The  space 
$\mathcal{C}([t_k,t_{k+1}],Q)=\lc q:[t_k,t_{k+1}]\Flder Q\,|\, q(t_k)=q_k,\,q(t_{k+1})=q_{k+1} \rc,$
will be approximated by $\mathcal{C}^s([t_k,t_{k+1}],Q)\subset \mathcal{C}([t_k,t_{k+1}],Q)$, where $\mathcal{C}^s([t_k,t_{k+1}],Q)$ denotes the space of polynomials of degree $s$.  Given $s+1$ control points $0=d_0<d_1<\cdots<d_{s-1}<d_s=1$ and $s+1$ configurations $q_k=(q_k^0,q_k^1,\ldots,q_k^{s-1},q_k^s)$, with $q_k^0=q_k$ and $q_k^{s}=q_{k+1}$, then $q_d(t;k)\in\mathcal{C}^s([t_k,t_{k+1}],Q)$ can be defined by
\begin{equation}\label{Polynomials}
q_d(t;k)=\sum_{\nu=0}^sq_k^{\nu}\,\ell_{\nu}\left(\frac{t}{h}\right),
\end{equation}
where $\ell_{\nu}(\tau)$ are Lagrange polynomials of degree $s$ such that $\ell_{\nu}(d_i)= \delta_{\nu i}$ (here $\delta$ is the Kronecker symbol), and therefore $q_d(h\,d_i;k)=q_k^i$ according to \eqref{Polynomials}. Moreover, the time derivative of \eqref{Polynomials} is 
$$\dot q_d(t;k)=\frac{1}{h}\sum_{\nu=0}^sq_k^{\nu}\,\dot \ell_{\nu}\left(\frac{t}{h}\right).$$

\paragraph{(2) Quadrature for the action integral} For the approximation of the action integral \eqref{ContAc}, first we replace the curves $q(t)$ and $\dot q(t)$ by their polynomial counterparts $q_d(t;k)$, $\dot q_d(t;k)$ in the interval $[k\,h\,,\,(k+1)\,h]$, i.e.~
\[
\int_{kh}^{(k+1)h}L(q_d(t;k),\dot q_d(t;k))\,dt,\quad k=0,\ldots, N-1.
\]
Next, in the same time interval, a quadrature rule $(b_i,c_i)_{i=1}^r$ is applied, with $c_i\in[0,1]$. This defines the discrete Lagrangian:
\begin{equation}\label{DiscLag}
L_d(q_k)\equiv L_d(q_k^0, \ldots,q_k^s):=h\sum_{i=1}^rb_iL(q_d(c_i\,h;k),\dot q_d(c_i\,h;k));
\end{equation}
it is important to remark that $L_d$ depends on $s+1$ variables. Naturally, the choice of quadrature should be adapted to the desired order of approximation with respect to the continuous action in order that now can be arbitrarily high.

The construction of higher order  variational integrators can be summarized in Figure \ref{fig:interpolation}.
\begin{figure}[H]
    \centering
    \includegraphics[width=.7\textwidth]{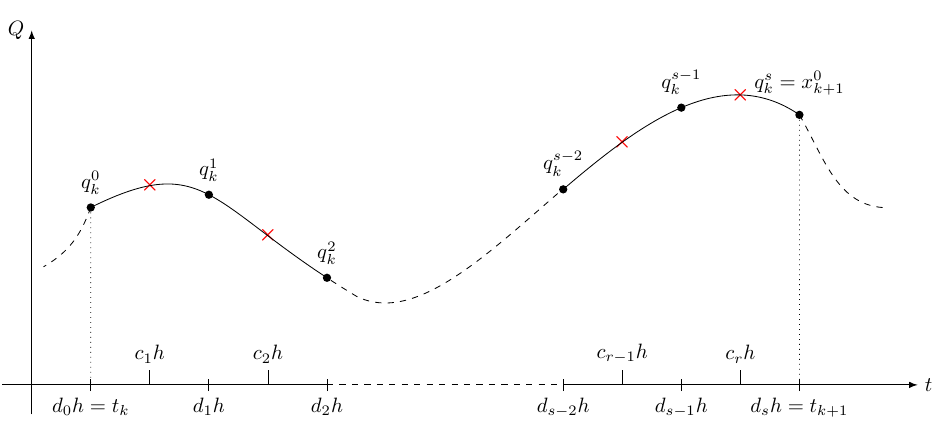}
    \caption{Polynomial interpolation principles. On each subinterval $[t_k,t_{k+1}]$, the trajectory interpolated by a polynomial passing through
the  points $\{q_k^i\}_{i=0}^{s}$ associated with the control points $\{d_\nu h\}$. The evaluations are made by the quadrature points for the
$\{c_ih\}$, indicated by  cross points.}
    \label{fig:interpolation}
\end{figure}

\medskip

Now, following \S\ref{Disc}, the action sum can be defined from \eqref{DiscLag} by
\[
\mathcal{S}_d(q_d)=\sum_{k=0}^{N-1}L_d(q_k),
\]
and the discrete Hamilton's principle can be applied in order to obtain a necessary and sufficient condition for its extremals, i.e.~ the discrete Euler-Lagrange equations. These are:
\[
\begin{split}
D_{s+1}L_d(q_{k-1}^0,\ldots,q_{k-1}^s)+D_1L_d(q_{k}^0,\ldots,q_{k}^s)=0,\\
D_iL_d(q_{k}^0,\ldots,q_{k}^s)=0,\quad  \forall\,i=2,\ldots,s;
\end{split}
\]
for $k=1,\ldots,N-1,$ where the transition condition, namely $q_{k-1}^s=q_k^0$ has to be taken into account. See \cite{MaWe, SinaSaake} for further details.

Higher order variational integrators (also denoted as Galerkin variational integrators) have been extensively studied in \cite{HaLe13,Leok2011,SinaSaake,Ca13} for conservative systems and in \cite{Ca14} for optimal control problems. 
Obviously, the convergence order of higher order variational integrators is limited by the order of the function space approximation and the order of the quadrature rule.
In \cite{HaLe13} a lower error bound is provided by the approximation order of the finite-dimensional function space (e.g.~convergence order $s$ is reached by using the space of polynomials of degree $s$).
Numerical convergence studies  in \cite{SinaSaake} indicate that Galerkin variational integrators based on the Lobatto and Gauss quadrature rules are of order $\min{(2s,u)}$, where $s$ is the degree of the polynomial and $u$ the order of the quadrature rule. A general proof of this superconvergence result is provided in \cite{SinaMats} using backward error analysis in the context of the calculus of variations. For particular classes that are equivalent to so-called (modified) symplectic Runge-Kutta methods, a proof of a  superconvergence has been made  in \cite{O14}.

\section{Fractional integrals, fractional derivatives and their approximations by discrete convolutions}\label{FracIntegrals}

\subsection{Fractional integrals and fractional derivatives}
Let us  start by giving a brief overview concerning   the fractional operators. We refer to \cite{TheBook,Oldham,TheBook2}  for more details.

\subsubsection{Fractional integrals}
The Riemann-Liouville $\alpha$-fractional integrals, Re $\alpha>0$, for $f:[0,T]\Flder\R$\footnote{For $f\in L^1([0,T];\R)$,   $J^{\alpha}_{-}f$ and $J^{\alpha}_{+}f$ are defined almost everywhere on $[0,T]$. Moreover,  they are defined everywhere on $[0,T]$, if  $f\in C([0,T];\R)$. (see \cite{Kai,Lavoie, Oldham} for more details).} are defined by
\begin{subequations}\label{RLInt}
\begin{align}
J^{\alpha}_{-}f(t)&=\frac{1}{\Gamma(\alpha)}\int_0^t(t-\tau)^{\alpha-1}f(\tau)\,d\tau,\,\,\,t\in (0,T],\label{RLInt:a}\\
J^{\alpha}_{+}f(t)&=\frac{1}{\Gamma(\alpha)}\int_t^T(\tau-t)^{\alpha-1}f(\tau)\,d\tau,\,\,\,t\in [0,T), \label{RLInt:b}
\end{align}
\end{subequations}
 where $\Gamma$ is the Euler Gamma function and we set $J^{0}_{-}f=J^{0}_{+}f=f$. The fractional integrals satisfy the so-called {\it semigroup property} \cite[Theorem 2.5, p.46]{TheBook}, i.e.~
\begin{equation}\label{Semigroup}
J_{\sigma}^{\alpha}\,J_{\sigma}^{\beta}f(t)=J_{\sigma}^{\alpha+\beta}f(t),\quad \sigma\in\lc -,+\rc.
\end{equation}

Let $m\in \N$. If the function $f$ is   continuously differentiable in $[0,T]$, then it can be continued analytically to  Re $\alpha<0$ via
\begin{equation}\label{GeneralDef}
J^{\alpha}_{-} f(t)=\frac{d^m}{dt^m}J^{m+\alpha}_{-}f(t), \quad J^{\alpha}_{+} f(t)=\frac{d^m}{dt^m}J^{m+\alpha}_{+}f(t) \quad \mbox{for Re} \ \alpha >-m.
\end{equation}
In this case, the fractional integral is called {\it fractional derivative}, and can be denoted by $D^{-\alpha}$ (note that the real part of $\alpha$ is now negative). 

In particular, from \eqref{GeneralDef}, and restricting ourselves to Re $\alpha\in [0,1]$ (which will be the range of interest in this work), we can establish the following definition of Riemann-Liouville fractional derivatives, which is usually found in the literature: 

\subsubsection{Fractional derivatives}
Let Re $\alpha\in [0,1]$. The left and right Riemann–Liouville fractional derivatives are respectively deﬁned by
\begin{equation}\label{RLDer}
\begin{split}
D^{\alpha}_{-}f(t)&=\,\,\,\,\frac{d}{dt}\,J^{1-\alpha}_{-}f(t)=\quad \frac{1}{\Gamma(1-\alpha)}\frac{d}{dt}\int_0^t(t-\tau)^{-\alpha}f(\tau)\,d\tau,\,\,\,t\in (0,T], \\
D^{\alpha}_{+}f(t)&=-\frac{d}{dt}\,J^{1-\alpha}_{+}f(t)=-\frac{1}{\Gamma(1-\alpha)}\frac{d}{dt}\int_t^T(\tau-t)^{-\alpha}f(\tau)\,d\tau,\,\,\,t\in [0,T),
\end{split}
\end{equation}
provided that $f\in AC([0,T],\R)$ which is  a very simple sufficient condition for the existence. 
 It is easy to see that $D^{0}_{-}f=D^{0}_{+}f=f$, whereas it can be proven that 
\begin{equation}\label{alpha1}
D^{1}_{-}f=-D^{1}_{+}f=df/dt.
\end{equation}
The last relationships follow easily from the definitions \eqref{RLDer} (first equality) and $J^{0}_{-}f=J^{0}_{+}f=f$, but the latter are not trivial from the definitions \eqref{RLInt} \footnote{The expression of the fractional derivative as a fractional integral of negative $\alpha$-index (and therefore its inverse operator) can be made explicit. Namely, from \eqref{Semigroup}, \eqref{RLDer} and \eqref{alpha1}, plus considering that $J_{-}^1$ is the usual integral operator (inverse of the derivative), we have
\[
D^{\alpha}_{-}f=D^1_{-}\,J_{-}^{1-\alpha}f=D^1_{-}\,J_{-}^{1}\,J_{-}^{-\alpha}f=J_{-}^{-\alpha}\,f,
\]
where, recall, $\alpha\in[0,1]$. A similar computation can be done for the Caputo definition of the fractional derivative \cite{TheBook}, i.e.~ $_cD^{\alpha}_{-}f:=J^{1-\alpha}_{-}\,D^1_{-}f=D^{\alpha}_{-}f$ by imposing that $f(0)=0$.}.\\

Other relevant properties of fractional derivatives are 
\begin{subequations}\label{FracProperties}
\begin{align}
\int_0^Tf(t)D^{\alpha}_{\lambda}g(t)\, dt&=\int_0^T g(t)\big[D^{\alpha}_{-\lambda}f(t)\big]    \,dt,\quad \sigma\in\lc-,+\rc, \label{IntegrationByParts}\\
D_{\lambda}^{\alpha}D_{\lambda}^{\beta}&=D_{\lambda}^{\alpha+\beta},\quad 0\leq\alpha,\beta\leq1/2.\label{Aditive}
\end{align}
\end{subequations}
Property \eqref{IntegrationByParts} is called ``asymmetric integration by parts'' of the fractional derivatives, whereas \eqref{Aditive} is called again the ``semigroup property''.

\begin{remark}
It is important to note that, contrary to the Caputo derivatives, the Riemann–Liouville derivatives of a function $f$, in particular $D_-^\alpha f(t)$,  could lead to singularity at $t=0$. That is why the Caputo derivatives are more useful in applications, otherwise   we should impose $f(0)=0$, which will be the case in our discussion in Section \ref{sec:RVP}.
\end{remark}

\subsection{Fractional integrals as convolutions}

For the time being, let us focus on the {\it retarded} fractional integral in \eqref{RLInt}, i.e.~ $J^{\alpha}_{-}f(t)$. It is easy to see that it is a particular convolution integral
\begin{equation}\label{ContConv}
J^{\alpha}_{-}f(t)=(\kappa^{(\alpha)}* f)(t):=\int_0^t\kappa^{(\alpha)}(t-\tau)f(\tau)d\tau
\end{equation}
where the convolution kernel is given by
\begin{equation}\label{ConvKer}
\kappa^{(\alpha)}(t)=\frac{t^{\alpha-1}}{\Gamma(\alpha)}.
\end{equation}
Here, $(\alpha)$ must not be understood as a power in the left hand side, it is just a superscript denoting the dependence of the kernel on the parameter $\alpha$.\footnote{In complete generality, a convolution integral can be defined for any kernel as
\begin{equation}\label{GeneralConvo}
(\kappa* f)(t):=\int_0^t\kappa(t-\tau)f(\tau)d\tau.
\end{equation}}. The Laplace transform of this convolution kernel is given by
\begin{equation}\label{LaplaceTrans}
K^{(\alpha)}(s):=\mathcal{L}(\kappa^{(\alpha)})(s)=\int_0^{\infty}\frac{t^{\alpha-1}}{\Gamma(\alpha)}e^{-st}dt=s^{-\alpha}.
\end{equation}

\subsection{Discrete convolution}\label{ConQua}
The theory of discrete convolutions is developed in \cite{Lubich1,Lubich2,Lubich3} by Ch. Lubich (indeed, the topic of \cite{Lubich1} is the discretization of fractional integrals).

As in \S\ref{Disc}, let us consider a time grid $t_k=\lc k\,h\,| k=0,\ldots, N\rc$, where $h\in\R_+$ is the time step and $h\,N=T$. Moreover, consider a discrete series  $\lc f_{k}\rc_{0:N}\in (\R^d)^{N+1}$, where $f_k$ shall be an approximation of $f(t_k)$.

Now, define the discrete convolution in the following as an approximation of the continuous convolution $(\kappa* f)(t_k)$ in \eqref{GeneralConvo}, namely

\begin{equation}\label{DiscConv}
(\kappa* f)(t_k)\approx(\omega*f)(t_k):=\sum_{n=0}^{\infty}\omega_nf_{k-n}=^{1}\sum_{n=0}^{k}\omega_nf_{k-n},
\end{equation}
 observe that the series is truncated after $=^{1}$ since $f_k$ is not defined for $k<0$ \footnote{It could be argued that a more natural expression for the discrete convolution would be 
\[
(\omega*f)(t_k):=\sum_{n=0}^{k}\omega_{k-n}^{}f_{n},
\]
according to \eqref{ContConv} and the relationship between the continuous and discrete times. It can be shown easily that both expressions are equivalent after rearranging the discrete time index and reordering the coefficients. Moreover, \eqref{DiscConv} will make more sense when the discrete convolution is understood as the application of a certain operator over the discrete series $\lc f_{k}\rc_{0:N}$}, with a general convolution kernel $\kappa$. The convolution quadrature weights $\omega_n$ are defined as the coefficients of the generating power series
\begin{equation}\label{SeriesK}
K\left(\frac{\gamma(z)}{h}\right):=\sum_{n=0}^{\infty}\omega_nz^n,\quad\quad |z|\quad \mbox{small}.
\end{equation}
Here, $K(s)$ is the Laplace transform of the kernel $\kappa$ and the so-called  characteristic function $\gamma(z)=\sum_{n=0}^{\infty}\gamma_nz^n$  is the quotient of the generating polynomials $(\rho,\sigma)$ of a linear multistep method (LMM) $$\rho_0y_k+\rho_1 y_{k-1}+\ldots+\rho_ny_{k-n}=h(\sigma_0f_k+\sigma_1 f_{k-1}+\ldots+\sigma_nf_{k-n}) $$ 
for the differential equation $y^{\prime}=f(y)$, i.e.~
\begin{equation}\label{DeltaDef}
\gamma(z)=\frac{\rho(z)}{\sigma(z)}=\frac{\rho_0+\rho_1 z+\ldots+\rho_nz^n}{\sigma_0+\sigma_1 z+\ldots+\sigma_nz^n},
\end{equation}
where we assume that $\rho_0/\sigma_0>0$,  so that \eqref{SeriesK} is well-defined at least for sufficiently small $h$. Note that 
if we define $z_{-}$ to be the discrete backward operator, i.e.~~
\[
z_{-}\,f_k=f_{k-1},
\]
then the LMM can be defined as $\rho(z_{-})\,y_k=h\,\sigma(z_{-})\,f_k$, where $\lc y_k\rc_{0:N}\in(\R^d)^{N+1}$ is also a discrete series.

More importantly for the purposes of this article, the discrete convolution approximating the fractional integral \eqref{RLInt:a},\eqref{ContConv} can be redefined, according to \eqref{SeriesK}, by
\begin{equation}\label{DiscIntMinus}
\mathcal{J}_{-}^{\alpha}f_k:=K^{(\alpha)}\left(\frac{\gamma(z_{-})}{h}\right)\,f_k=\sum_{n=0}^{k}\omega_n^{(\alpha)}f_{k-n},
\end{equation}
where $K^{(\alpha)}(\gamma(z)/h)$, with $K^{(\alpha)}$ given in \eqref{LaplaceTrans}, has to be understood as an operator acting on $\lc f_k\rc_{0:N}$.  On the other hand,  considering the discrete forward operator
\[
z_{+}\,f_k=f_{k+1},
\]
the discrete convolution approximating the fractional integral \eqref{RLInt:b} will be
\begin{equation}\label{DiscIntPlus}
\mathcal{J}_{+}^{\alpha}f_k:=K^{(\alpha)}\left(\frac{\gamma(z_{+})}{h}\right)\,f_k=\sum_{n=0}^{N-k}\omega_n^{(\alpha)}f_{k+n},
\end{equation}
where again the series \eqref{SeriesK} gets truncated because $\lc f_k\rc_{0:N}$ is undefined for $k>N$\footnote{For a given function, the discrete convolution can be also defined in continuous time, namely
\[
\mathcal{J}_{\lambda}^{\alpha}f(t):=K^{(\alpha)}(\gamma(z_{\lambda})/h)\,f(t)=\sum_{n\geq0}\omega_n^{(\alpha)}f(t\,\pm\,n\,h).
\]
}. It is interesting to note that, following these definitions, the convolution weights $\omega^{(\alpha)}_n$ are the same for $\mathcal{J}^{\alpha}_{-}$ and $\mathcal{J}^{\alpha}_{+}$.

Now we are in situation to show the proof of some properties relevant for future results.

\begin{lemma}\label{ConvProperties}
Consider two discrete series $\lc f_k\rc_{\tiny 0:N}, \lc g_k\rc_{\tiny 0:N}$. Then the following properties hold true:

\begin{itemize}
\item[(1)] The semigroup property of the discrete convolution: $\mathcal{J}^{\alpha}_{\lambda}\,\mathcal{J}^{\beta}_{\lambda}\,f_k=\mathcal{J}^{\alpha+\beta}_{\lambda}\,f_k.$

\item[(2)] The asymmetric integration by parts:
\begin{equation}\label{AsymmetricInt:1}
\sum_{k=0}^{N}g_k\,(\mathcal{J}_{-}^{\alpha}f_k)=\sum_{k=0}^N(\mathcal{J}_{+}^{\alpha}g_k)\,f_k.
\end{equation}
\end{itemize}
\end{lemma}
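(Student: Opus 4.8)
The plan is to use the defining feature of convolution quadrature: by \eqref{SeriesK}--\eqref{DiscIntMinus}, the operator $\mathcal{J}^{\alpha}_{\lambda}$ is the formal power series $K^{(\alpha)}(\gamma(z_{\lambda})/h)=(\gamma(z_{\lambda})/h)^{-\alpha}$ in the shift $z_{\lambda}$, with coefficients $\omega^{(\alpha)}_n$. Composing two such operators therefore amounts to multiplying their generating series, i.e.\ to taking the Cauchy product of the weight sequences, and the elementary identity $K^{(\alpha)}(s)K^{(\beta)}(s)=s^{-\alpha}s^{-\beta}=s^{-(\alpha+\beta)}=K^{(\alpha+\beta)}(s)$ coming from \eqref{LaplaceTrans} will do the rest.

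Concretely, for (1) with $\lambda=-$, I would write $\mathcal{J}^{\beta}_{-}f_k=\sum_{j\geq0}\omega^{(\beta)}_j f_{k-j}$ and apply $\mathcal{J}^{\alpha}_{-}$ to obtain the double sum $\sum_{i,j\geq0}\omega^{(\alpha)}_i\omega^{(\beta)}_j f_{k-i-j}$; collecting the terms with $i+j=n$ yields the coefficient $\sum_{i=0}^{n}\omega^{(\alpha)}_i\omega^{(\beta)}_{n-i}$, which is precisely the $n$-th coefficient of $K^{(\alpha)}(\gamma(z)/h)\,K^{(\beta)}(\gamma(z)/h)=K^{(\alpha+\beta)}(\gamma(z)/h)$, hence equal to $\omega^{(\alpha+\beta)}_n$. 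Thus $\mathcal{J}^{\alpha}_{-}\mathcal{J}^{\beta}_{-}f_k=\sum_{n\geq0}\omega^{(\alpha+\beta)}_n f_{k-n}=\mathcal{J}^{\alpha+\beta}_{-}f_k$, and the case $\lambda=+$ is identical with $z_{+}$ in place of $z_{-}$. The single point requiring care is that the truncations in \eqref{DiscIntMinus}--\eqref{DiscIntPlus} are compatible with this reindexing: agreeing that the sequences vanish outside $0:N$ makes every sum finite and the manipulation exact on the grid, since for $\lambda=-$ only indices $\leq k$ and for $\lambda=+$ only indices $\geq k$ ever contribute.

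For (2), I would expand the left-hand side of \eqref{AsymmetricInt:1} via \eqref{DiscIntMinus} as $\sum_{k=0}^{N}\sum_{n=0}^{k}\omega^{(\alpha)}_n\,g_k\,f_{k-n}$, regarded as a sum over the lattice triangle $0\leq n\leq k\leq N$. Replacing the summation index $k$ by $m=k-n$ turns the index set into $\lc (m,n):m\geq0,\ n\geq0,\ m+n\leq N\rc$, and summing over $n$ first gives $\sum_{m=0}^{N}f_m\sum_{n=0}^{N-m}\omega^{(\alpha)}_n g_{m+n}$; by \eqref{DiscIntPlus} the inner sum equals $\mathcal{J}^{\alpha}_{+}g_m$, so after renaming $m$ as $k$ we recover the right-hand side. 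Neither part is deep — the work is entirely bookkeeping with finite double sums, the only subtlety being the bridge between the operator/power-series picture and the truncated convolutions, which is resolved once the ``vanish outside $0:N$'' convention is fixed (consistent with the Riemann--Liouville framework, where $f(0)=0$ is moreover imposed later in Section~\ref{sec:RVP}).
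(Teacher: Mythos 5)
Your proposal is correct and follows essentially the same route as the paper: part (1) rests on the generating-function identity $K^{(\alpha)}(\gamma(z_\lambda)/h)\,K^{(\beta)}(\gamma(z_\lambda)/h)=K^{(\alpha+\beta)}(\gamma(z_\lambda)/h)$, which the paper applies at the operator level while you spell out the equivalent Cauchy product of the weights (and correctly note that the truncations cause no harm, since for $\lambda=-$ only indices $\le k$ and for $\lambda=+$ only indices $\ge k$ ever appear), and part (2) is the same triangular reindexing of the finite double sum that the paper performs. No gaps; the extra care about the weight-level bookkeeping is a harmless elaboration of the paper's argument.
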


\begin{proof}

(1) From the definitions \eqref{DiscIntMinus} and \eqref{DiscIntPlus} (first equalities) with $K^{(\alpha)}(s)=s^{-\alpha}$ \eqref{LaplaceTrans}, it follows:
\begin{align*}
    \mathcal{J}^{\alpha}_{\lambda}\,\mathcal{J}^{\beta}_{\lambda}\,f_k&=K^{(\alpha)}\left(\frac{\gamma(z_{\lambda})}{h}\right)K^{(\beta)}\left(\frac{\gamma(z_{\lambda})}{h}\right)\,f_k\\
    &=\left(\frac{\gamma(z_{\lambda})}{h}\right)^{-\alpha}\left(\frac{\gamma(z_{\lambda})}{h}\right)^{-\beta}\,f_k\\
&=\left(\frac{\gamma(z_{\lambda})}{h}\right)^{-\alpha-\beta}
=K^{\alpha+\beta}\left(\frac{\gamma(z_{\lambda})}{h}\right)
=\mathcal{J}^{\alpha+\beta}_{\lambda}\,f_k,
\end{align*}

(2) See \cite{Cresson1} for more details:

\[
\begin{split}
\sum_{k=0}^Ng_k\,(\mathcal{J}_{-}^{\alpha}f_k)&=^{_1}\sum_{k=0}^N\sum_{n=0}^k\omega_n^{(\alpha)}\,g_k\,f_{k-n}=^{_2}\sum_{n=0}^N\sum_{k=n}^N\omega_n^{(\alpha)}\,g_k\,f_{k-n}\\
&=^{_3}\sum_{n=0}^N\sum_{k=0}^{N-n}\omega_n^{(\alpha)}\,g_{k+n}\,f_{k}=^{_4}\sum_{k=0}^N\sum_{n=0}^{N-k}\omega_n^{(\alpha)}\,g_{k+n}\,f_{k}=^{_6}\sum_{k=0}^{N}(\mathcal{J}_{+}^{\alpha}g_k)\,f_k.
\end{split}
\]
In $=^{_1}$ the definition \eqref{DiscIntMinus} (second equality) is used. To prove $=^{_2}$  is enough to notice that, for fixed $j=0, \ldots,N,$  the elements  $a_{i}:=\omega_i^{(\alpha)}\,g_j\,f_{j-i}$, $i=0, \ldots,j$, on the left hand side, disposed in columns, form an upper diagonal $(N+1)\times (N+1)$, whereas the same elements on the right hand side, for $j=0, \ldots,N$ and $i=j, \ldots,N$, account for the transposed matrix; therefore their total sums are equal. In $=^{_3}$ the sum index is rearranged. In $=^{_4}$ equivalent arguments to $=^{_2}$ can be used. Finally, in $=^{_6}$ the definition \eqref{DiscIntPlus} (second equality) is used; this concludes the proof. 
\end{proof}

Now, we take into consideration the convergence order of $\mathcal{J}_{\lambda}^{\alpha}$ with respect to $J^{\alpha}_{\lambda}$ following \cite{Lubich1}; as we will see, this order is sensitive to the convergence order of the multistep method $(\rho, \sigma)$. In particular, for the generating function $\gamma=\rho/\sigma$, which gives rise to $\omega_n^{(\alpha)}$ according to \eqref{DiscIntMinus}, we say that the corresponding multistep method is convergent of order $p$ if and only if 
\begin{subequations}
\label{ConvergenceDelta}
\begin{equation}
    \tilde\gamma_n\quad\mbox{are bounded},\,\,\mbox{(stability)}\label{ConvergenceDelta1}
\end{equation}
\begin{equation}
    h\,\tilde\gamma (\,e^{-h}\,)=1+O(h^p),\quad\mbox{as}\,\,h\Flder 0,\,\mbox{(order $p$ consistency)},\label{ConvergenceDelta2}
\end{equation}
\end{subequations}
where $\tilde\gamma_n$ are coefficients of  the power series $\gamma^{-1}(z):=1/\gamma(z)$. 
Now, we introduce the notion of convergence of the quadrature $\omega_n^{(\alpha)}$.
\begin{definition}\label{DefConver}
A convolution quadrature determined by the coefficients $\omega_n^{(\alpha)}$ is convergent of order $p$ (to $J^{\alpha}_{\lambda}$) if
\begin{equation}\label{OrderSat}
J^{\alpha}_{\lambda }t^{\beta-1}-\mathcal{J}^{\alpha}_{\lambda}t^{\beta-1}=O(h^{\beta})+O(h^p),
\end{equation}
for all $\beta\in\C$, $\beta\neq 0,-1,-2,\ldots.$
\end{definition}

\begin{theorem}[Theorem 2.6 in \cite{Lubich1}]\label{LubichsTheo}
Let $(\rho,\sigma)$ denote an implicit linear multistep method which is convergent of order $p$ \eqref{ConvergenceDelta} and assume that the zeros of $\sigma$ have absolute value less than 1. Then, $\mathcal{J}^{\alpha}_{\lambda}$ \eqref{DiscIntMinus}, \eqref{DiscIntPlus} are convergent of order $p$ (Definition \ref{DefConver}) to $J^{\alpha}_{\lambda}$ \eqref{RLInt}.
\end{theorem}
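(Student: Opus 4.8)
The plan is to reduce Definition \ref{DefConver} to a coefficient estimate for the power series of $\gamma(z)^{-\alpha}$ and then to read off the error by singularity analysis at $z=1$. First I would fix $\lambda=-$; the case $\lambda=+$ follows by running the identical argument with $z_{+}$ in place of $z_{-}$ (the weights $\omega_n^{(\alpha)}$ are the same, as noted after \eqref{DiscIntPlus}), or by the time reversal $t\mapsto T-t$, which interchanges $J^{\alpha}_{-}\leftrightarrow J^{\alpha}_{+}$ and $\mathcal{J}^{\alpha}_{-}\leftrightarrow\mathcal{J}^{\alpha}_{+}$. Writing $\gamma(z)^{-\alpha}=\sum_{n\ge0}c_n^{(\alpha)}z^{n}$, the first equality in \eqref{DiscIntMinus} with $K^{(\alpha)}(s)=s^{-\alpha}$ (see \eqref{LaplaceTrans}) gives $\omega_n^{(\alpha)}=h^{\alpha}c_n^{(\alpha)}$, so that for $f(t)=t^{\beta-1}$ and $t_k=kh$,
\[
\mathcal{J}^{\alpha}_{-}t^{\beta-1}\big|_{t_k}=\sum_{n=0}^{k}\omega_n^{(\alpha)}\,(t_{k-n})^{\beta-1}
=h^{\alpha+\beta-1}\,[z^{k}]\!\Bigl(\gamma(z)^{-\alpha}\,\Phi_{\beta}(z)\Bigr),\qquad \Phi_{\beta}(z):=\sum_{j\ge1}j^{\beta-1}z^{j},
\]
where $[z^{k}]$ denotes the $k$-th Taylor coefficient, whereas $J^{\alpha}_{-}t^{\beta-1}\big|_{t_k}=\tfrac{\Gamma(\beta)}{\Gamma(\alpha+\beta)}(kh)^{\alpha+\beta-1}$. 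Hence \eqref{OrderSat} becomes a statement about $[z^{k}]\bigl(\gamma(z)^{-\alpha}\Phi_{\beta}(z)\bigr)$, to be compared with $\tfrac{\Gamma(\beta)}{\Gamma(\alpha+\beta)}k^{\alpha+\beta-1}$, and then multiplied by $h^{\alpha+\beta-1}$ and estimated using $kh\le T$.

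Next I would carry out the analytic part. Rewriting the order-$p$ consistency condition \eqref{ConvergenceDelta2} with $z=e^{-h}$ yields $\gamma(z)=\log(1/z)\bigl(1+O((1-z)^{p})\bigr)$ as $z\to1$, hence $\gamma(z)^{-\alpha}=(\log(1/z))^{-\alpha}\bigl(1+O((1-z)^{p})\bigr)$. The stability condition \eqref{ConvergenceDelta1} (which forces $\rho$ to have no zeros in $|z|<1$) together with the hypothesis that all zeros of $\sigma$ satisfy $|z|<1$ (which tames the branch points of $\gamma^{-\alpha}=\sigma^{\alpha}\rho^{-\alpha}$ coming from $\sigma$) guarantees that $\gamma(z)^{-\alpha}$ is holomorphic and non-vanishing on a neighbourhood of the closed unit disk punctured at the single point $z=1$; consequently \eqref{SeriesK} has radius of convergence at least $1$ and $z=1$ is its only singularity on $|z|=1$. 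Moreover $\Phi_{\beta}$ (a polylogarithm) admits the Lerch-type expansion $\Phi_{\beta}(z)=\Gamma(\beta)(\log(1/z))^{-\beta}+A_{\beta}(z)$ with $A_{\beta}$ holomorphic at $z=1$. Multiplying, the dominant singularity of $\gamma(z)^{-\alpha}\Phi_{\beta}(z)$ at $z=1$ is $\Gamma(\beta)(\log(1/z))^{-\alpha-\beta}$, whose $k$-th coefficient is $\tfrac{\Gamma(\beta)}{\Gamma(\alpha+\beta)}k^{\alpha+\beta-1}$ up to lower-order corrections and reproduces exactly the continuous value; the factor $1+O((1-z)^{p})$ produces a singular part of type $(1-z)^{\,p-\alpha-\beta}$, while $A_{\beta}$ paired with $\gamma^{-\alpha}$ produces one no stronger than $(1-z)^{-\alpha}$. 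Applying the transfer theorems of singularity analysis to each piece, and noting that off the closed unit disk $\gamma^{-\alpha}\Phi_{\beta}$ has only singularities at distance $R>1$, contributing $O(R^{-k})$, I would obtain that $[z^{k}]\bigl(\gamma(z)^{-\alpha}\Phi_{\beta}(z)\bigr)-\tfrac{\Gamma(\beta)}{\Gamma(\alpha+\beta)}k^{\alpha+\beta-1}$ is a sum of terms which, after multiplication by $h^{\alpha+\beta-1}$ and use of $kh\le T$, collapse to $O(h^{\beta})+O(h^{p})$, i.e.~\eqref{OrderSat}.

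The hard part will be the precise, uniform bookkeeping of the two error orders. One genuinely needs a multi-term asymptotic expansion of $c_n^{(\alpha)}=[z^{n}]\gamma(z)^{-\alpha}$, not merely the leading term $n^{\alpha-1}/\Gamma(\alpha)$: the $O(h^{\beta})$ contribution is an endpoint/low-regularity effect of $t^{\beta-1}$ that survives even when $p=\infty$, whereas the $O(h^{p})$ contribution reflects the finite accuracy of $(\rho,\sigma)$, and the two must be separated without an intermediate term that is larger than both; in particular the convolution sum $\sum_{n}c_n^{(\alpha)}(k-n)^{\beta-1}$ must be controlled both near $n=k$, where $(k-n)h$ is small, and near $n=0$, with bounds uniform over $k=0,\dots,N$. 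The second delicate point — and the only place where the assumption on the zeros of $\sigma$ is essential — is justifying the deformation of the Cauchy contour for $c_n^{(\alpha)}$ slightly outside $|z|=1$ (equivalently, the absence of further singularities of $\gamma^{-\alpha}$ on the unit circle), without which the discrete convolution need not converge at all. For the complete technical estimates I would follow Lubich \cite{Lubich1}, of which this is Theorem~2.6; the argument above is its structure.
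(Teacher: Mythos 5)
The paper does not prove this result: it is imported verbatim as Theorem 2.6 of \cite{Lubich1}, so there is no internal proof here to compare against. Your outline faithfully reproduces the structure of Lubich's own argument --- reduction to the coefficients of $\gamma(z)^{-\alpha}$, comparison with $(\log(1/z))^{-\alpha}$ via the order-$p$ consistency condition \eqref{ConvergenceDelta2}, the singular expansion of $\sum_{j\ge1}j^{\beta-1}z^{j}$ at $z=1$, and contour/transfer estimates whose justification rests on the stability and $\sigma$-zero hypotheses --- and, as you yourself note, the uniform bookkeeping separating the $O(h^{\beta})$ and $O(h^{p})$ contributions is precisely the technical content supplied in \cite{Lubich1}.
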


In \cite{Lubich1} is also established, under the condition of $p$-convergence in Definition \ref{DefConver}, that for functions $f(t)=t^{\beta-1}g(t)$, $g(t)$ smooth, there always exists a starting quadrature $\varpi_{k,n}$\footnote{In practice, computing  $\varpi_{k,n}$ becomes more complicated for some values of $\alpha$ where a linear system should be solved at each step.} which is defined by
\begin{equation}\label{StartingQuad}
\tilde{\mathcal{J}}_{-}^{\alpha}f_k:=\sum_{n=0}^{k}\omega_{n}^{(\alpha)}f_{k-n}+h^{\alpha}\sum_{n=0}^s\varpi_{k,n}f_n,
\end{equation}
with $s$ fixed, such that
\begin{equation}\label{OrderMax}
J^{\alpha}_{\lambda}f-\tilde{\mathcal{J}}_{\lambda}^{\alpha}f=O(h^p),
\end{equation}
uniformly for $t\in[0,T]$. In other words, to achieve the order $p$  convergence of the underlying LMM,  the extra term in \eqref{OrderMax} should be introduced in order to eliminate low order terms in the error bound in \eqref{OrderSat}.

It is important to remark that due to the presence of the extra $\varpi$-initial terms in \eqref{StartingQuad} the {\it convolution structure} is violated, and therefore the properties proved in Lemma \ref{ConvProperties} are no longer true for $\tilde{\mathcal{J}}_{\lambda}^{\alpha}f$. In Lubich's own words: ``{\it The convolution structure is only violated by the few correction terms of the starting quadrature which will be necessary for high order schemes}''. Indeed, this is also relevant in terms of the convergence order, since from Definition \ref{DefConver}, Theorem \ref{LubichsTheo} and \eqref{OrderMax} we conclude that for a function $f(t)=t^{\beta-1}g(t)$, with $g(t)$ smooth, a multistep method $(\rho,\sigma)$ is $p$-convergent would generate the following convergence bound:
\begin{equation}\label{SaturationOrder}
J^{\alpha}_{\lambda}f-\mathcal{J}_{\lambda}^{\alpha}f=O(h^{\beta})+O(h^p).
\end{equation}
Thus, we expect the saturation of the convergence order at $\min(\beta,p)$. We illustrate the saturation \eqref{SaturationOrder} for the functions $t\mapsto t^{\beta-1}\sin(t),\ \beta = 1,3,4,5$ in Figure \ref{Saturation}, which is a log-plot of $h$ versus the error $e_h$, defined as usual in the literature by means of the maximum norm, i.e.~
\begin{equation}\label{eq:caputoerror}
    e_h= \max_{0\leq k\leq N}\,\big|J^{\alpha}_{-}f(t_k)-\mathcal{J}_{-}^{\alpha}f_k\big|,	
\end{equation}
where the $h$-dependence of $e_h$ is implicit in the time grid $t_k$. 
Of our interested,  the characteristic function  $\gamma(z_{-})$ of  the classical backward diﬀerentiation formula (BDF)  up to order $6$, that is,
\begin{equation}\label{BDFgenfunction}
\gamma(z)=\sum_{k=1}^p \frac{1}{k}(1-z)^k:=\gamma_p(z).
\end{equation}

\begin{figure}[H]
\centering
\includegraphics[width=.48\textwidth]{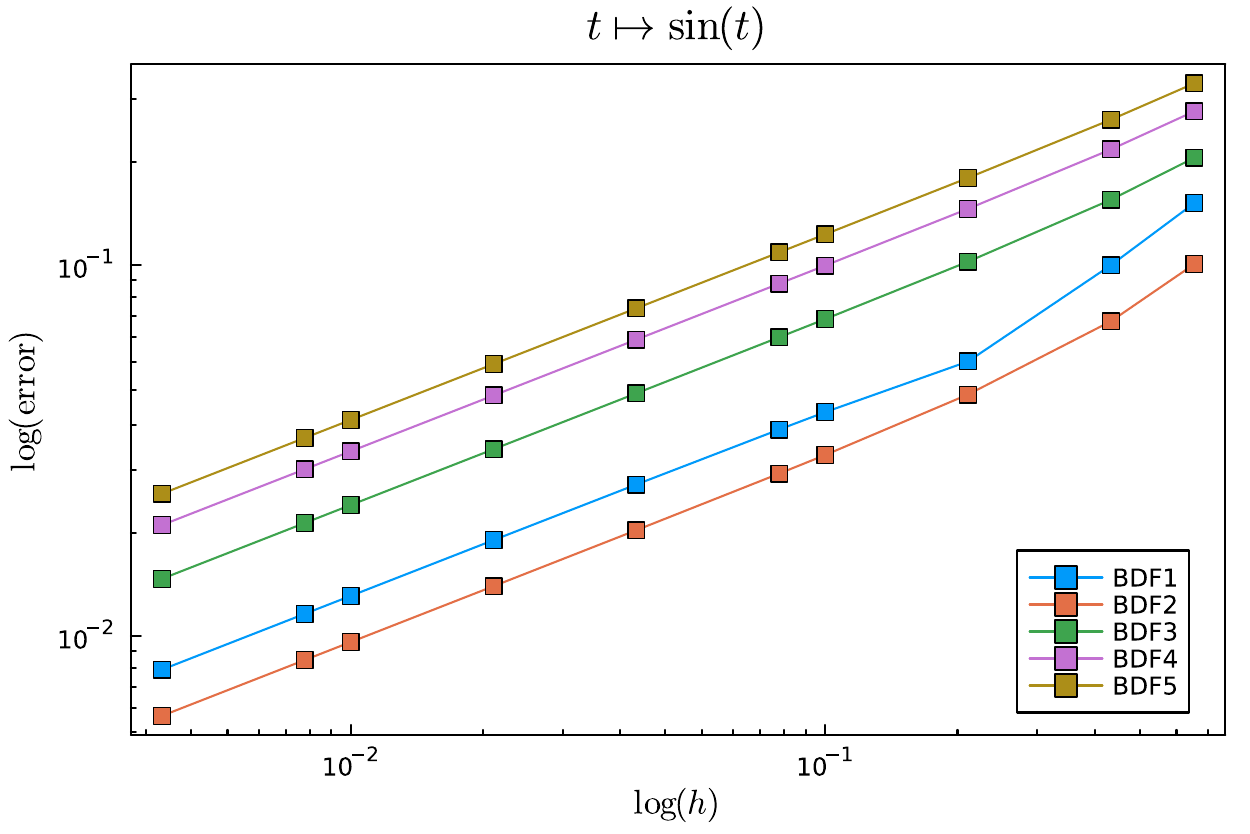}\hfill
\includegraphics[width=.48\textwidth]{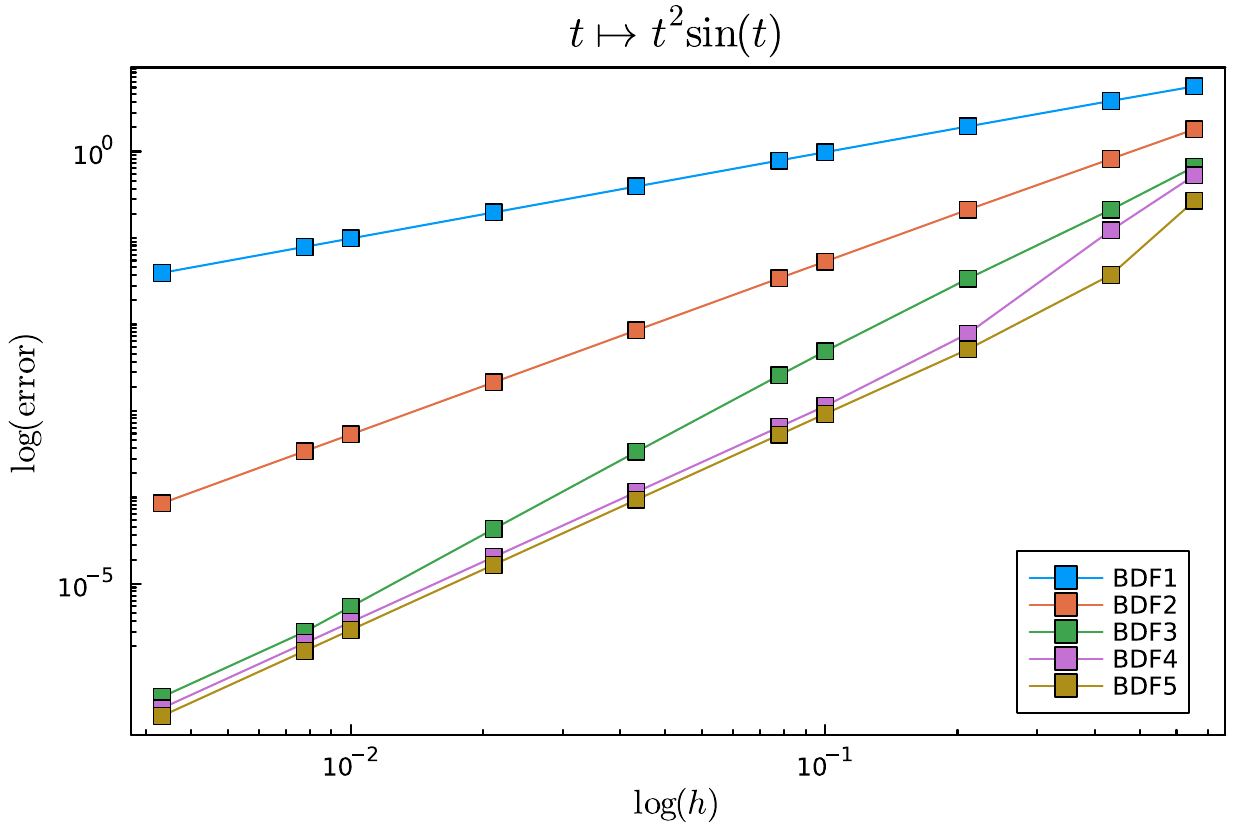}\\
\includegraphics[width=.48\textwidth]{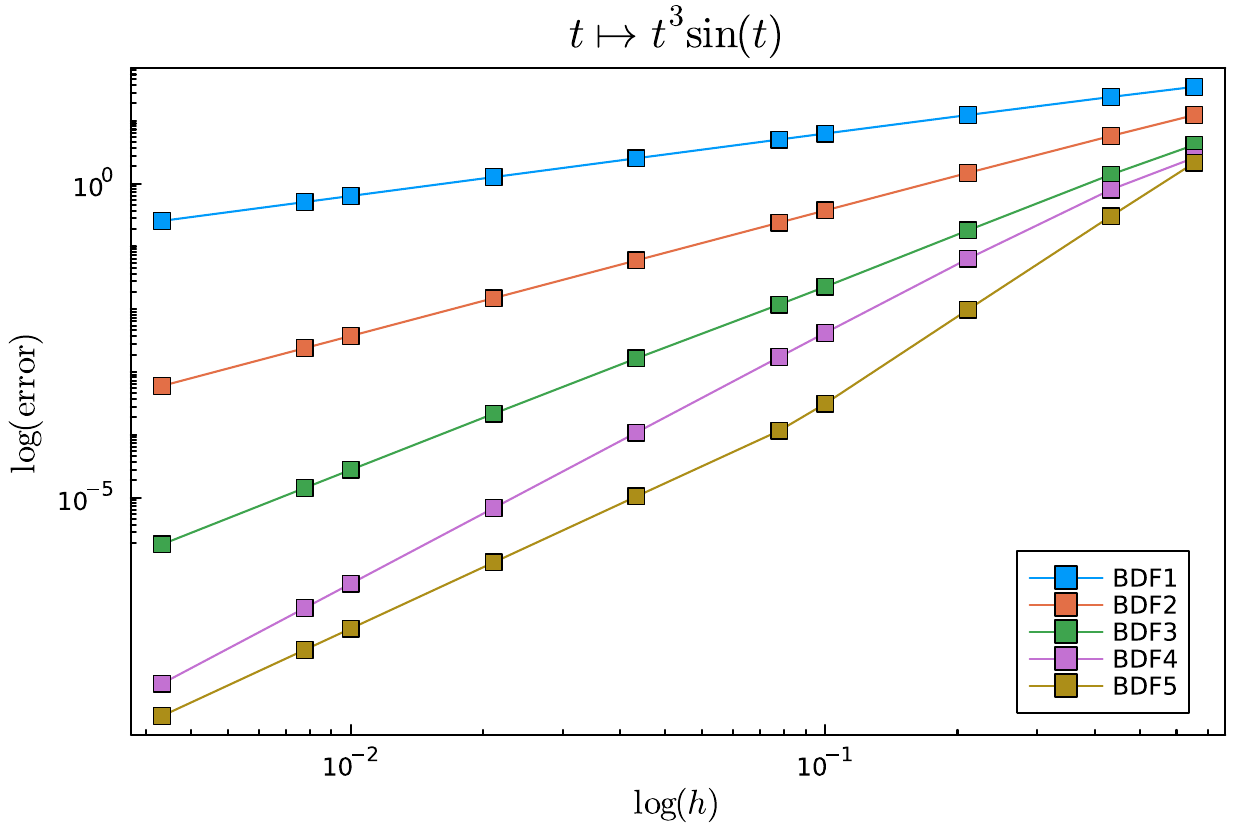}\hfill
\includegraphics[width=.48\textwidth]{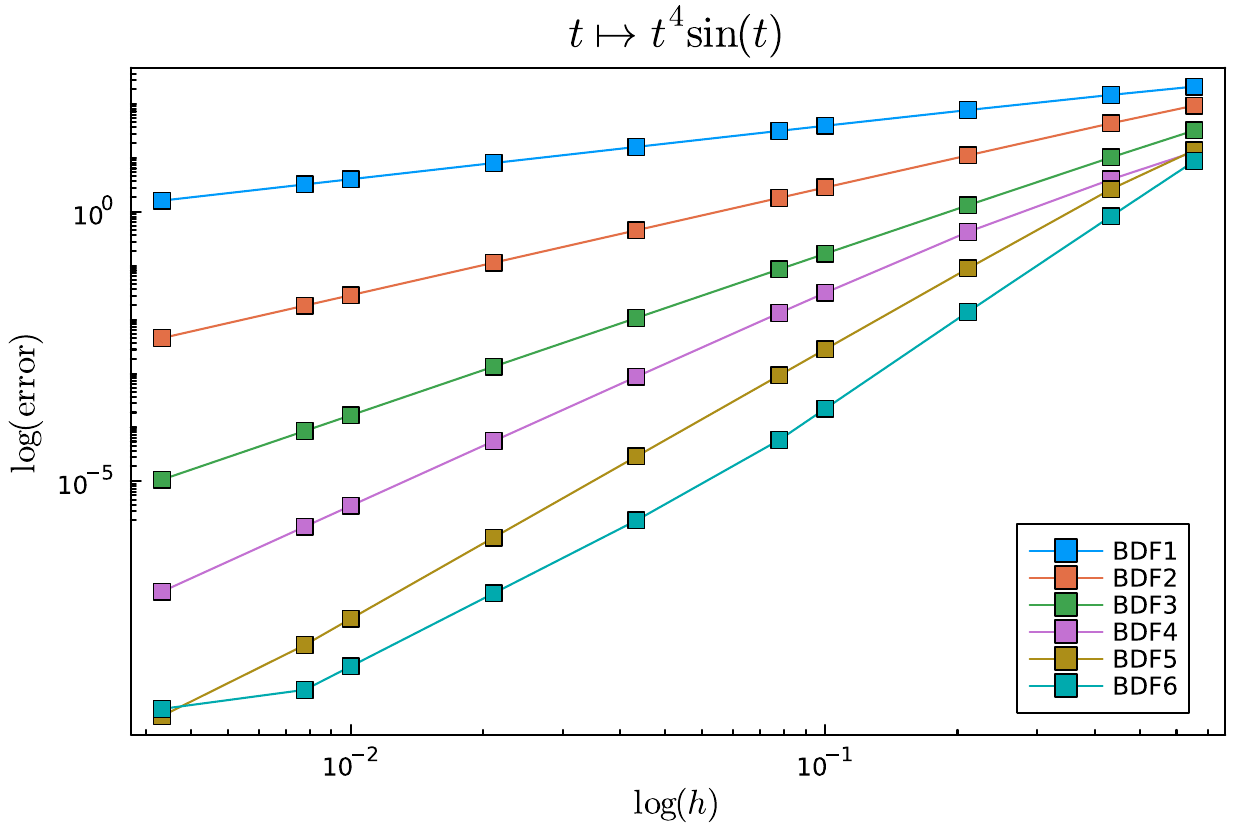}\hfill
\caption{Log-Log plot the error $e(h)$ versus  $h$  corresponds to the Caputo fractional derivative of order $1/2$ ($\alpha=-1/2$ in \eqref{eq:caputoerror}).  As expected, the convergence starts to saturate at $p=1$ (upper-left), $p=3$ (upper-right),  $p=4$ (lower-left) and $p=5$ (lower-right). }
\label{Saturation}
\end{figure}

\section{Restricted variational principle for the dynamics of Lagrangian systems subject to fractional damping}\label{sec:RVP}

In \cite{JiOb1,JiOb2} a restricted variational principle (both continuous and discrete) in order to obtain the dynamics of a Lagrangian system subject to fractional damping is delivered. As in other previous approaches treating dissipative systems in a variational fashion \cite{Agra,Bateman,Cresson,Galley,Riewe}, it is based on the doubling of variables and the introduction of their fractional derivatives (which in this work will be considered as fractional integrals with negative $\alpha$ index, as explained in \S\ref{FracIntegrals}) in the state space of the relevant Lagrangians.

\subsection{Continuous setting}

Let us consider the $AC^2$ curves $x,y:[0,T]\Flder\R^d$ and a $C^2$ Lagrangian function $L:T\R^d\Flder \R$ (also $L:\R^d\times\R^d\Flder\R$). Define a $C^2$ Lagrangian function\footnote{In \cite{JiOb1,JiOb2} the state space of $\mathcal{L}$ is defined as a particular vector bundle with base point $(x,y)$ and fiber $(\dot x,\dot y, J^{-\alpha}_{-}x, J^{-\beta}_{+}y)$. However, this space is totally isomorphic to the Cartesian product of six copies of $\R^d$ and we will stick to this for simplicity, preserving the notation of the base point $(x,y)$ as argument of the forthcoming actions also for simplicity.} 
\begin{equation}\label{GenerLag}
\begin{array}{rcl} \mathcal{L}:\R^d\times\R^d\times\R^d\times\R^d\times\R^d\times\R^d &  \Flder  & \R \\
 (x,\,y,\,\dot x,\,\dot y,\, J^{-\alpha}_{-}x,\, J^{-\beta}_{+}y)    & \mapsto&\mathcal{L}(x,y,\dot x,\dot y, J^{-\alpha}_{-}x,J^{-\beta}_{+}y)\\ [2ex]
  \mathcal{L} (x,\,y,\,\dot x,\,\dot y,\, J^{-\alpha}_{-}x,\, J^{-\beta}_{+}y)=L(x,\dot x)+&&\hspace{-1cm}L(y,\dot y)-\mu\,J^{-\alpha}_{-}x\,J^{-\beta}_{+}y,
\end{array}
\end{equation}
where $\alpha,\beta \in [0,1/2]$ and $\mu\in\R_+$. Given this particular Lagrangian, we define the relevant action:
\begin{equation}\label{FracAction}
\begin{split}
\Sg(x,y)=\Sc(x,y)&+\Sf(x,y), \\
\Sc(x,y)=\int_0^T(L(x(t),\dot x(t))+L(y(t),\dot y(t))\,)\, dt,&\quad \Sf(x,y)=-\mu\,\int_0^TJ^{-\alpha}_{-}x(t)\,J^{-\beta}_{+}y(t)\,dt,
\end{split}
\end{equation}
where {\it cons} goes after ``conservative'' and {\it frac} after ``fractional''. Moreover, let us define restricted varied curves by means of
\begin{equation}\label{VariedCurves}
x_{\epsilon}(t)=x(t)+\epsilon\, \delta x(t),\,\,\,y_{\epsilon}(t)=y(t)+\epsilon\, \delta x(t),
\end{equation}
with $\epsilon\in\R_{+}$ and an $AC^2$ $\,\,\delta x:[0,T]\Flder\R^d$ such that $\delta x(0)=\delta x(T)=0$ (observe that the variations are equal for both curves, which is the base of the {\it restriction}). With these elements, and assuming fixed endpoints $x(0),\,x(T),\,y(0),\,y(T)$, we can establish the restricted variational principle:
\begin{theorem}\label{ContTheo}
The equations
\begin{subequations}\label{ContFracDamp}
\begin{align}
\frac{d}{dt}\lp\frac{\der L}{\der\dot x}\rp-\frac{\der L}{\der x}=-\mu\,J^{-(\alpha+\beta)}_{-}x,\label{ContFracDamp:a}\\
\frac{d}{dt}\lp\frac{\der L}{\der\dot y}\rp-\frac{\der L}{\der y}=-\mu\,J^{-(\alpha+\beta)}_{+}y,\label{ContFracDamp:b}
\end{align}
\end{subequations}
are  {\rm sufficient} conditions for the extremals of $\Sg(x,y)$ \eqref{FracAction} under restricted calculus of variations \eqref{VariedCurves}.
\end{theorem}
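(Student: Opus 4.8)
The plan is to compute the first variation $\delta\Sg:=\left.\frac{d}{d\epsilon}\Sg(x_\epsilon,y_\epsilon)\right|_{\epsilon=0}$ along the restricted variations \eqref{VariedCurves} and to check that it vanishes for \emph{every} admissible $\delta x$ as soon as $(x,y)$ solves \eqref{ContFracDamp:a}--\eqref{ContFracDamp:b}, treating separately the conservative part $\Sc$ and the fractional part $\Sf$ of \eqref{FracAction}. For $\Sc$ the classical argument applies: differentiate under the integral, set $\epsilon=0$, and integrate by parts the terms carrying $\frac{d}{dt}\delta x$; since $\delta x(0)=\delta x(T)=0$ the boundary terms drop, giving
\[
\left.\frac{d}{d\epsilon}\Sc(x_\epsilon,y_\epsilon)\right|_{\epsilon=0}=\int_0^T\left[\left(\frac{\partial L}{\partial x}-\frac{d}{dt}\frac{\partial L}{\partial\dot x}\right)(x,\dot x)+\left(\frac{\partial L}{\partial y}-\frac{d}{dt}\frac{\partial L}{\partial\dot y}\right)(y,\dot y)\right]\delta x\,dt.
\]
The structural point is that the \emph{same} $\delta x$ multiplies both Euler--Lagrange residuals, which is precisely the effect of the restriction \eqref{VariedCurves}.

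For $\Sf$ I would use that $J^{-\alpha}_{-}$ and $J^{-\beta}_{+}$ are linear, so $J^{-\alpha}_{-}x_\epsilon=J^{-\alpha}_{-}x+\epsilon\,J^{-\alpha}_{-}\delta x$ and likewise for $y_\epsilon$; the product rule then gives
\[
\left.\frac{d}{d\epsilon}\Sf(x_\epsilon,y_\epsilon)\right|_{\epsilon=0}=-\mu\int_0^T\left[(J^{-\alpha}_{-}\delta x)(J^{-\beta}_{+}y)+(J^{-\alpha}_{-}x)(J^{-\beta}_{+}\delta x)\right]dt.
\]
The main step is now to move the fractional operators off $\delta x$ using the asymmetric integration by parts \eqref{IntegrationByParts}, turning $J^{-\alpha}_{-}$ into $J^{-\alpha}_{+}$ in the first term (acting on $y$) and $J^{-\beta}_{+}$ into $J^{-\beta}_{-}$ in the second (acting on $x$), and then to collapse $J^{-\alpha}_{+}J^{-\beta}_{+}$ and $J^{-\beta}_{-}J^{-\alpha}_{-}$ by the semigroup property \eqref{Aditive}, which is licit because $\alpha,\beta\in[0,1/2]$ forces $\alpha+\beta\le1$. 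This produces
\[
\left.\frac{d}{d\epsilon}\Sf(x_\epsilon,y_\epsilon)\right|_{\epsilon=0}=-\mu\int_0^T\delta x\left(J^{-(\alpha+\beta)}_{-}x+J^{-(\alpha+\beta)}_{+}y\right)dt.
\]

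Adding the two contributions gives
\[
\delta\Sg=\int_0^T\left[\left(\frac{\partial L}{\partial x}-\frac{d}{dt}\frac{\partial L}{\partial\dot x}-\mu J^{-(\alpha+\beta)}_{-}x\right)+\left(\frac{\partial L}{\partial y}-\frac{d}{dt}\frac{\partial L}{\partial\dot y}-\mu J^{-(\alpha+\beta)}_{+}y\right)\right]\delta x\,dt,
\]
so if $(x,y)$ satisfies \eqref{ContFracDamp:a}--\eqref{ContFracDamp:b} then each bracket vanishes pointwise on $[0,T]$ and hence $\delta\Sg=0$ for all admissible $\delta x$; this is exactly the asserted sufficiency. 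I would emphasize that the converse is \emph{not} available: since a single variation $\delta x$ is shared by both curves, the fundamental lemma of the calculus of variations cannot be applied to the two residuals separately, so vanishing of $\delta\Sg$ does not force \eqref{ContFracDamp}. This is why the principle yields only sufficient conditions and is called ``restricted''.

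The step I expect to be the main obstacle is the use of \eqref{IntegrationByParts} for the negative-index operators $J^{-\alpha},J^{-\beta}$, i.e.~for genuine Riemann--Liouville fractional derivatives rather than fractional integrals: as noted in the remark following \eqref{FracProperties}, the boundary-term-free form used above relies on the vanishing initial values $x(0)=y(0)=0$ (equivalently, on the Riemann--Liouville and Caputo derivatives coinciding there), so this hypothesis must be recorded in the statement or its proof. One should also verify that the $AC^2$ regularity of $x,y,\delta x$ is enough to justify the differentiation under the integral sign and the applications of \eqref{IntegrationByParts} and \eqref{Aditive}, which holds for $\alpha,\beta\in[0,1/2]$.
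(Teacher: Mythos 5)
Your proposal is correct and follows essentially the route the paper relies on: the paper defers the proof of Theorem \ref{ContTheo} to \cite{JiOb2} but explicitly identifies its key ingredients as the asymmetric integration by parts \eqref{IntegrationByParts} and the semigroup property \eqref{Aditive}, which is exactly how you treat $\Sf$, combined with the standard variation of $\Sc$ and the observation that the shared $\delta x$ only yields sufficiency. Your closing caveat about $x(0)=y(0)=0$ matches the paper's own remark preceding Section \ref{sec:RVP}, so nothing essential is missing.
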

It is important to remark that in the proof of this theorem (see \cite{JiOb2}), it is crucial the use of the asymmetric integration by parts \eqref{IntegrationByParts} and the semigroup property \eqref{Aditive} of the fractional derivatives. In addition, it is also proven in \cite[Proposition 3.2]{JiOb2} that under even parity of $L$ in the velocity variable, then \eqref{ContFracDamp:b}  reduces to \eqref{ContFracDamp:a}  in reversed time, i.e.~  $y(t)=x(T-t)$. Finally, it is easy to see that the dynamics \eqref{ContFracDamp}, say the Lagrangian dynamics subject to fractional damping, reduces to the usual linear damped dynamics when $\alpha=\beta=1/2$, according to \eqref{alpha1}.

In the following, we give several intermediary lemmas which are obtained by
generalizing the results presented in

\subsection{Discrete setting based on CQ}

In \cite{JiOb1,JiOb2}, the author applied the discretization procedure described in \S\ref{Disc} and the Grünwald-Letnikov approximation  for the fractional derivative to derive the so-called {\it fractional variational integrators}. In the following, we propose a generalization of this process using CQ. For that, let us consider two discrete series $x_d=\lc x_k\rc_{0:N}$ and $y_d=\lc y_k\rc_{0:N}$, as well as 
two particular discretizations  of \eqref{RLDer}, i.e.~
\begin{equation}\label{Order1CQ}
\mathcal{J}_{-}^{-\alpha}x_k=\sum_{n=0}^k\omega_n^{(-\alpha)}x_{k-n}, \qquad \mathcal{J}_{+}^{-\beta}y_k=\sum_{n=0}^{N-k}\omega_n^{(-\beta)}y_{k+n},
\end{equation}
where the weights $\omega_n^{(-\alpha)}$
are the  coefficients of the generating power series of $K^{(-\alpha)}(\gamma(z)/h)$ with $K^{(\alpha)}$ defined in \eqref{LaplaceTrans}, namely
$$K^{(-\alpha)}\left(\frac{\gamma(z)}{h}\right)=\left(\frac{\gamma(z)}{h}\right)^\alpha=\sum_{k=0}^\infty \omega_n^{(-\alpha)} z^n.$$
For concreteness,  we choose again $\gamma_p(z)$ as in  \eqref{BDFgenfunction}, the  characteristic  function of the  
backward differentiation formulas. The Grünwald weights used in   \cite{JiOb2}  is equivalent to CQ with a particular choice of $\gamma_p(z)$, i.e.~
$\gamma_1(z)=1-z$ and  the notation  $\mathcal{J}_{-}^{-\alpha}x_k$ and $\mathcal{J}_{+}^{-\beta}y_k$ have been used  as $\Delta^{\alpha}_{-}x_k$ and $\Delta^{\beta}_{+}y_k$ respectively.\\

 The discrete action, counterpart of \eqref{FracAction}, is then
\begin{equation}\label{DiscFracAction}
\begin{split}
\mathcal{L}_d(x_d,y_d)=\Sc_d(x_d,y_d)&+\Sf_d(x_d,y_d), \\
\Sc_d(x_d,y_d)=\sum_{k=0}^{N-1}(L_d(x_k,x_{k+1})+L_d(y_k,y_{k+1})),&\quad \Sf_d(x_d,y_d)=-\mu \,h\,\,\sum_{k=0}^{N}\mathcal{J}_{-}^{-\alpha}x_k \,\mathcal{J}_{+}^{-\beta}y_k.
\end{split}
\end{equation}  
Taking the discrete equivalent of the restricted varied curves, i.e.~
\begin{equation}\label{DiscVariedCurves}
x_d^{\epsilon}=x_d+\epsilon\, \lc\delta x_k\rc_{0:N},\,\,\,y_d^{\epsilon}=y_d+\epsilon\, \lc\delta x_k\rc_{0:N},
\end{equation}
such that $\delta x_0=\delta x_N=0$, we can establish the discrete counterpart of Theorem \eqref{ContTheo}:
\begin{theorem}\label{DiscTheo}
The equations
\begin{subequations}\label{DiscFracDamp}
\begin{align}
D_1L_d(x_k,x_{k+1})&+D_2L_d(x_{k-1},x_{k})=\mu\,h\,\mathcal{J}^{-(\alpha+\beta)}_{-}x_{k},\label{DiscFracDamp:a}\\
D_1L_d(y_k,y_{k+1})&+D_2L_d(y_{k-1},y_{k})=\mu\,h\,\mathcal{J}^{-(\alpha+\beta)}_{+}y_{k},\label{DiscFracDamp:b}
\end{align}
\end{subequations}
both for $k=1,\ldots,N-1$, are {\rm sufficient} conditions for the extremals of $\mathcal{L}_d(x_d,y_d)$ \eqref{DiscFracAction} under restricted calculus of variations \eqref{DiscVariedCurves}.
\end{theorem}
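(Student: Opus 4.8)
The plan is to reproduce, at the fully discrete level, the argument behind the continuous Theorem~\ref{ContTheo}, with the discrete asymmetric integration by parts and the discrete semigroup property of Lemma~\ref{ConvProperties} taking over the roles played by \eqref{IntegrationByParts} and \eqref{Aditive} in the continuous case. Concretely, I would substitute the restricted varied series \eqref{DiscVariedCurves} into the discrete action \eqref{DiscFracAction}, differentiate with respect to $\epsilon$ at $\epsilon=0$, and split the outcome as $\delta\mathcal{L}_d=\delta\Sc_d+\delta\Sf_d$. The structural point to keep in view throughout is that $x_d$ and $y_d$ are varied by the \emph{same} increment $\lc\delta x_k\rc_{0:N}$, so that in the end $\delta x_k$ will multiply the \emph{sum} of an $x$-contribution and a $y$-contribution; this is precisely what yields a sufficient (not necessary) condition.

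For the conservative part I would argue exactly as in \S\ref{Disc}: a discrete summation by parts on $\sum_{k=0}^{N-1}\bigl(L_d(x_k,x_{k+1})+L_d(y_k,y_{k+1})\bigr)$ gives
\[
\delta\Sc_d=\sum_{k=1}^{N-1}\bigl[D_1L_d(x_k,x_{k+1})+D_2L_d(x_{k-1},x_k)+D_1L_d(y_k,y_{k+1})+D_2L_d(y_{k-1},y_k)\bigr]\,\delta x_k,
\]
the endpoint terms at $k=0,N$ dropping out since $\delta x_0=\delta x_N=0$. For the fractional part, linearity of the discrete convolution operators $\mathcal{J}^{-\alpha}_{-}$ and $\mathcal{J}^{-\beta}_{+}$ yields
\[
\delta\Sf_d=-\mu\,h\sum_{k=0}^{N}\bigl[(\mathcal{J}^{-\alpha}_{-}\delta x_k)\,(\mathcal{J}^{-\beta}_{+}y_k)+(\mathcal{J}^{-\alpha}_{-}x_k)\,(\mathcal{J}^{-\beta}_{+}\delta x_k)\bigr].
\]
Then I would apply the asymmetric integration by parts \eqref{AsymmetricInt:1}: to the first term with $f_k=\delta x_k$ and $g_k=\mathcal{J}^{-\beta}_{+}y_k$, moving $\mathcal{J}^{-\alpha}_{-}$ off $\delta x_k$ and onto $y_k$ as $\mathcal{J}^{-\alpha}_{+}$; and to the second term reading \eqref{AsymmetricInt:1} in the reverse direction, moving $\mathcal{J}^{-\beta}_{+}$ off $\delta x_k$ and onto $x_k$ as $\mathcal{J}^{-\beta}_{-}$. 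Collapsing the resulting compositions with Lemma~\ref{ConvProperties}(1), namely $\mathcal{J}^{-\alpha}_{+}\mathcal{J}^{-\beta}_{+}=\mathcal{J}^{-(\alpha+\beta)}_{+}$ and $\mathcal{J}^{-\beta}_{-}\mathcal{J}^{-\alpha}_{-}=\mathcal{J}^{-(\alpha+\beta)}_{-}$, gives
\[
\delta\Sf_d=-\mu\,h\sum_{k=0}^{N}\bigl[\mathcal{J}^{-(\alpha+\beta)}_{-}x_k+\mathcal{J}^{-(\alpha+\beta)}_{+}y_k\bigr]\,\delta x_k,
\]
and since $\delta x_0=\delta x_N=0$ the range can be restricted to $k=1,\dots,N-1$.

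Adding the two contributions, $\delta\mathcal{L}_d$ is a sum over $k=1,\dots,N-1$ of $\delta x_k$ times the bracket $D_1L_d(x_k,x_{k+1})+D_2L_d(x_{k-1},x_k)+D_1L_d(y_k,y_{k+1})+D_2L_d(y_{k-1},y_k)-\mu\,h\,\mathcal{J}^{-(\alpha+\beta)}_{-}x_k-\mu\,h\,\mathcal{J}^{-(\alpha+\beta)}_{+}y_k$. If $x_d$ and $y_d$ satisfy \eqref{DiscFracDamp:a} and \eqref{DiscFracDamp:b} respectively, this bracket is exactly the sum of the (left minus right) sides of those two equations and hence vanishes term by term, so $\delta\mathcal{L}_d=0$ for every admissible $\lc\delta x_k\rc$ — the claimed sufficiency. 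I expect the only delicate point to be the bookkeeping in the two applications of Lemma~\ref{ConvProperties}(2): one must use it in both directions and track that the superscript $-\alpha$ (resp.\ $-\beta$) is carried unchanged while only the sub-index $-/+$ flips, and one must use the plain convolution quadrature \eqref{Order1CQ}, \emph{not} the corrected starting quadrature \eqref{StartingQuad}, since (as noted after \eqref{SaturationOrder}) the correction terms destroy the convolution structure on which Lemma~\ref{ConvProperties} rests. Everything else is the standard discrete calculus of variations of \S\ref{Disc}.
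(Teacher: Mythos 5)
Your proposal is correct and follows essentially the same route the paper relies on: vary the discrete action under the restricted variations, handle the conservative part by the standard discrete summation by parts, and handle the fractional part by the discrete asymmetric integration by parts and semigroup property of Lemma~\ref{ConvProperties}, so that the common variation $\delta x_k$ multiplies the sum of the $x$- and $y$-brackets, giving sufficiency. This is exactly the argument the paper attributes to \cite{JiOb2} and reproduces itself (with inner nodes) in the proof of Theorem~\ref{TheoremCQ}, so no further comparison is needed.
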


See \cite{JiOb2} for the proof. As in the continuous side, the semigroup property and asymmetric integration by parts of the discrete fractional derivatives/integrals,   properties (1) and (2) in Lemma \ref{ConvProperties}, respectively, are crucial in the proof of this theorem. It is also true that \eqref{DiscFracDamp:b} reduces to \eqref{DiscFracDamp:a} in discrete reversed time, i.e.~ $y_k=x_{N-k}$ (\cite{JiOb2}, Proposition 4.2). Naturally, the equations \eqref{DiscFracDamp} provide a discrete iteration scheme for the fractional damped dynamics \eqref{ContFracDamp}, with $\gamma_1(z)=1-z$, delivering a $p=1$ convergent integrator; see \cite{JiOb2} for further details.\\

In the following section, we will apply the order two midpoint variational integrator for the conservative part \cite{MaWe} and BDFCQ for the fractional one. Then, we will denote by FVI-BDFCQ  the scheme \eqref{DiscFracDamp:a} and  write it simply FVI when no confusion can arise. All the experiments are carried out in Julia Version 1.9.3.

\subsection{Numerical experiment} \label{subsec:experiment}
For the next example, we take the Lagrangian associated to the harmonic oscillator of the form $L(x,\dot x)=\dot x^2/2-x^2/2$  so that  the equation \eqref{ContFracDamp:a} reads for $\alpha=\beta$ 
\begin{equation}\label{eq:damped-oscillator}
    \ddot x+\mu\,D_-^{(2\alpha)} x+x=0, 
\end{equation}
where  $D_-^{(2\alpha)}\equiv J_-^{-(2\alpha)}$. Let us consider the problem on  $[0, 16]$ and choosing  $\alpha=1/2$ so that  fractional operator $D_-^{(2\alpha)}$ coincides with the usual operator $d/dt$. In this test run  we take the initial values $x(0)=0,\ \dot x(0)=1.2$ and $\mu=0.2$.  Firstly  we plot the numerical solution obtained by the FVI-BDF1CQ, explicit and implicit Euler integrators on the interval $[0, 16]$ with the stepsize $h = 0.125$ in Figure \ref{fig:FVIcovergence-integer} (left). The corresponding results of the energy dissipation  and the absolute errors are presented  in  Figure \ref{fig:FVIcovergence-integer} (right) and Figure \ref{fig:FVIcovergence-integer2} (left), respectively. Secondly we  compute the global errors as the maximum norm between the numerical solution and the exact solution, i.e. 
$$\max | x(t_k) - x_k|,\quad \forall k.$$
The results are presented in Figure \ref{fig:FVIcovergence-integer2} (right) as the global errors (in logarithmic scale) against stepsizes on $[0,16]$ with $h =16/2^i,\ i=4,\ldots,11$.

\begin{figure}[H]
\centering
\includegraphics[width=.48\textwidth]{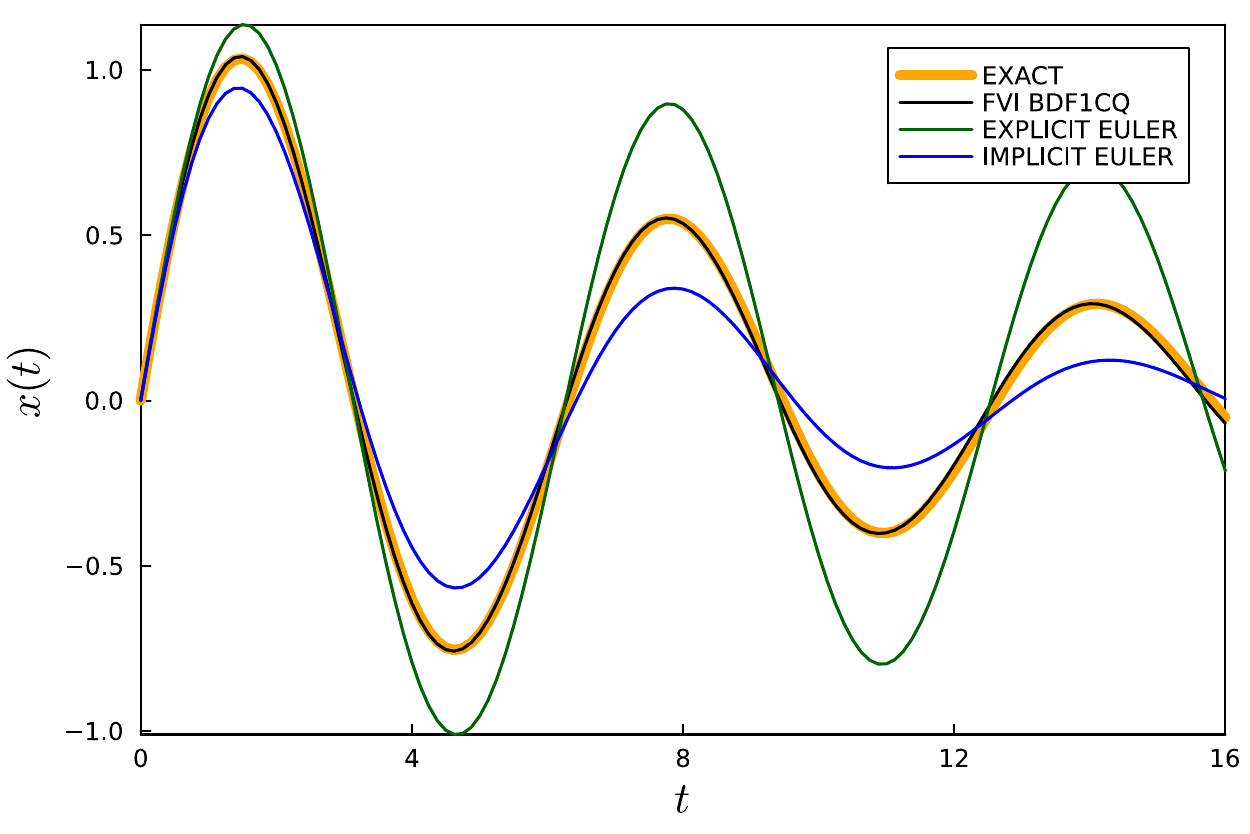}
\hfill
\includegraphics[width=.48\textwidth]{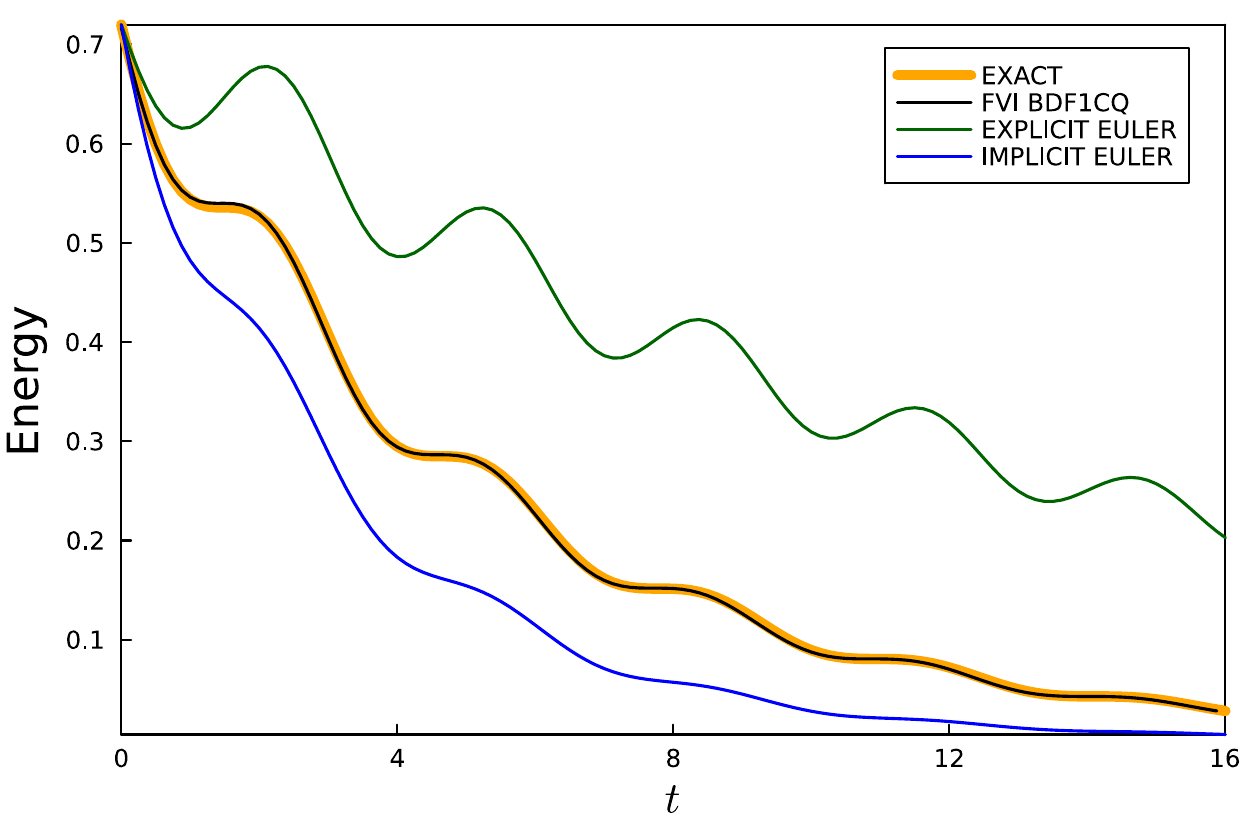}
\caption{Damped harmonic oscillator \eqref{eq:damped-oscillator} ($\alpha=1/2$). Left: Exact solution vs FVI-BDF1CQ method for $h=0.125$. Right: 
Energy behaviour for $h=0.125$.}
\label{fig:FVIcovergence-integer}
\end{figure}

The main property of a dissipative system is that the energy is always dissipated  with time and as we can see in Figure \ref{fig:FVIcovergence-integer} (right), FVI-BDF1CQ can preserve the dissipation structure of the damped harmonic oscillator which confirm that FVI-BDF1CQ gives good numerical behaviour.

\begin{figure}[H]
\centering
\includegraphics[width=.48\textwidth]{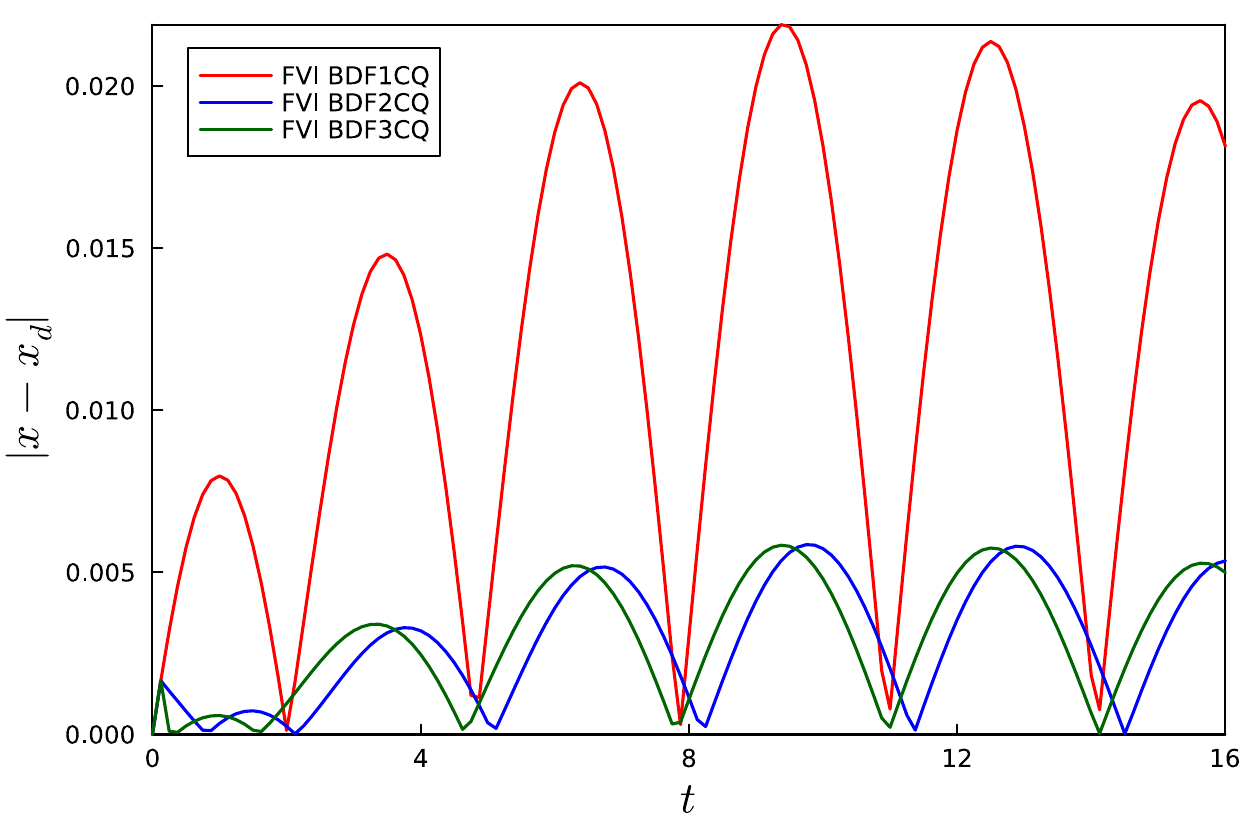}
\hfill
\includegraphics[width=.48\textwidth]{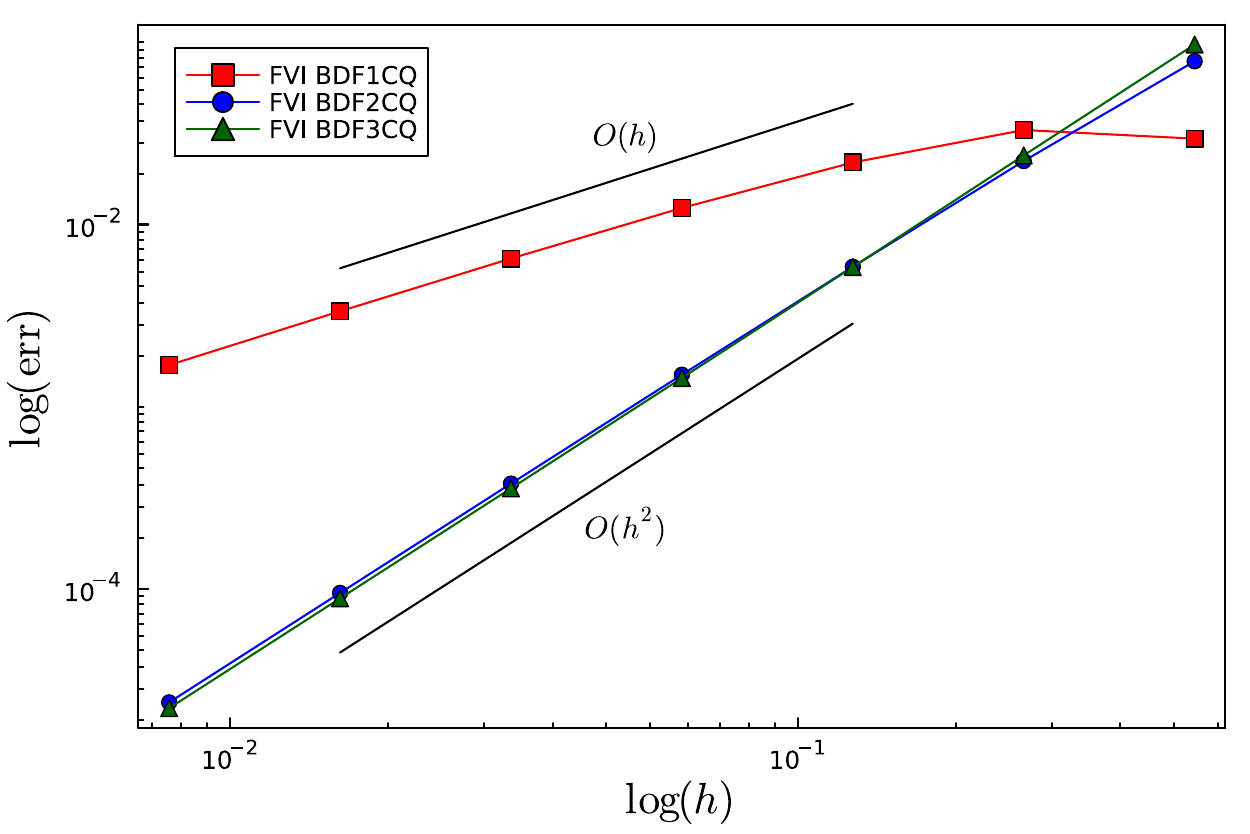}
\caption{Damped harmonic oscillator \eqref{eq:damped-oscillator}. Left: absolute errors  for $h=0.125$. Right: Log-Log plot of the global error presented on $t\in [0,16]$ for $h =16/2^i,\ i=4,\ldots,11$.}
\label{fig:FVIcovergence-integer2}
\end{figure}

We can confirm from Figure \ref{fig:FVIcovergence-integer2} (right) that the order of FVI-BDF1CQ is one and this result has been discussed in \cite{JiOb2}. However, we observe that the second order convergence both FVI-BDF2CQ and FVI-BDF3CQ  which is natural since the midpoint integrator being used is of second order.\\

We also consider another example.  Let us choose a Lagrangian of the  forced harmonic oscillator problem defined by $L(t,x,\dot x)=\dot x^2/2-x^2/2+x\, f(t)$  with a non-vanishing function $f$. So that  the equation \eqref{ContFracDamp:a}, again for $\alpha=\beta$, reads
\begin{equation}\label{eq:torvik}
    \ddot x+\mu\, D_-^{(2\alpha)} x+x=f(t),\quad t\in [0,1].
\end{equation}
For a non-integer $2\alpha$, this problem is known as Bagley-Torvik equation which can be used to describe, for example   the dynamics
of a rigid plate immersed in a Newtonian fluid when  $\alpha=3/4$ (see \cite{Ignor,Torvik}). Due to mathematical complexity, the analytic  solutions of  such  equation are very few and are restricted to the one  dimensional case. In particular, with the  initial conditions $x(0)=\dot x(0)=0$ and $\mu =1$, the Bagley-Torvik equation is exactly solvable  (see \cite{Raja,Ford}) by considering,
\begin{subequations}
\begin{align}
f(t) &= t^3  + 6 \, t + \frac{3.2}{\Gamma(1/2)}\,   t^2 \sqrt{t},\quad \alpha=\frac{1}{4} , \label{torvik1}\\
 f(t) &= \frac{15}{4} \sqrt{t} + \frac{15}{8} \sqrt{\pi} \, t   + t^2 \sqrt{t},\quad \alpha=  \frac{3}{4}.  \label{torvik2}
\end{align}
\end{subequations}
where the the analytic solutions are given, respectively, by $x(t)=t^3$ (resp.  $=t^{\frac{5}{2}}$). We
 solve numerically  the Bagley-Torvik problem \eqref{eq:torvik} using FVI on $[0,1]$ for $\alpha=  \frac{1}{4},\ \frac{3}{4}$. The global errors  (in logarithmic scale)  are presented in Figures \ref{fig:Torik}
 for  $h = 1/2^i,\ i=1,\ldots,8$.

\begin{figure}[H]
\centering
\includegraphics[width=.48\textwidth]{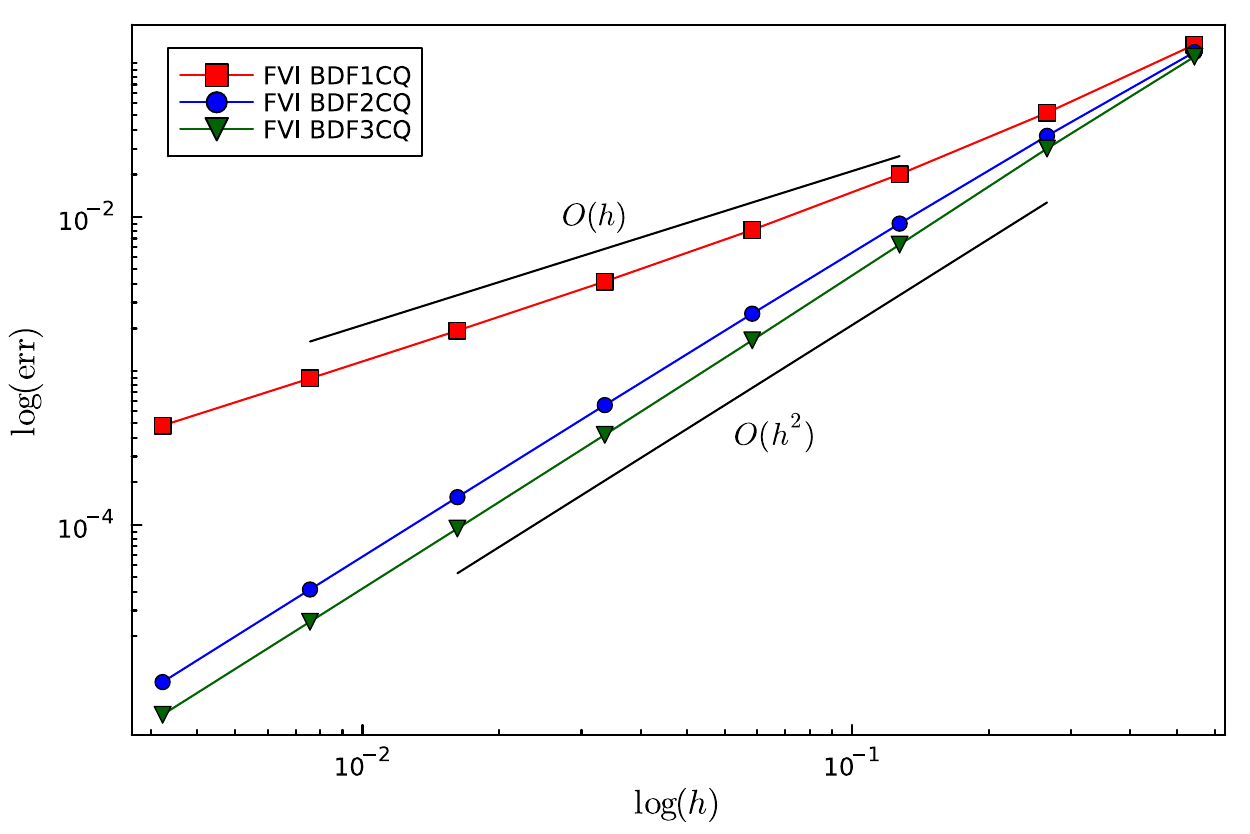}
\hfill
\includegraphics[width=.48\textwidth]{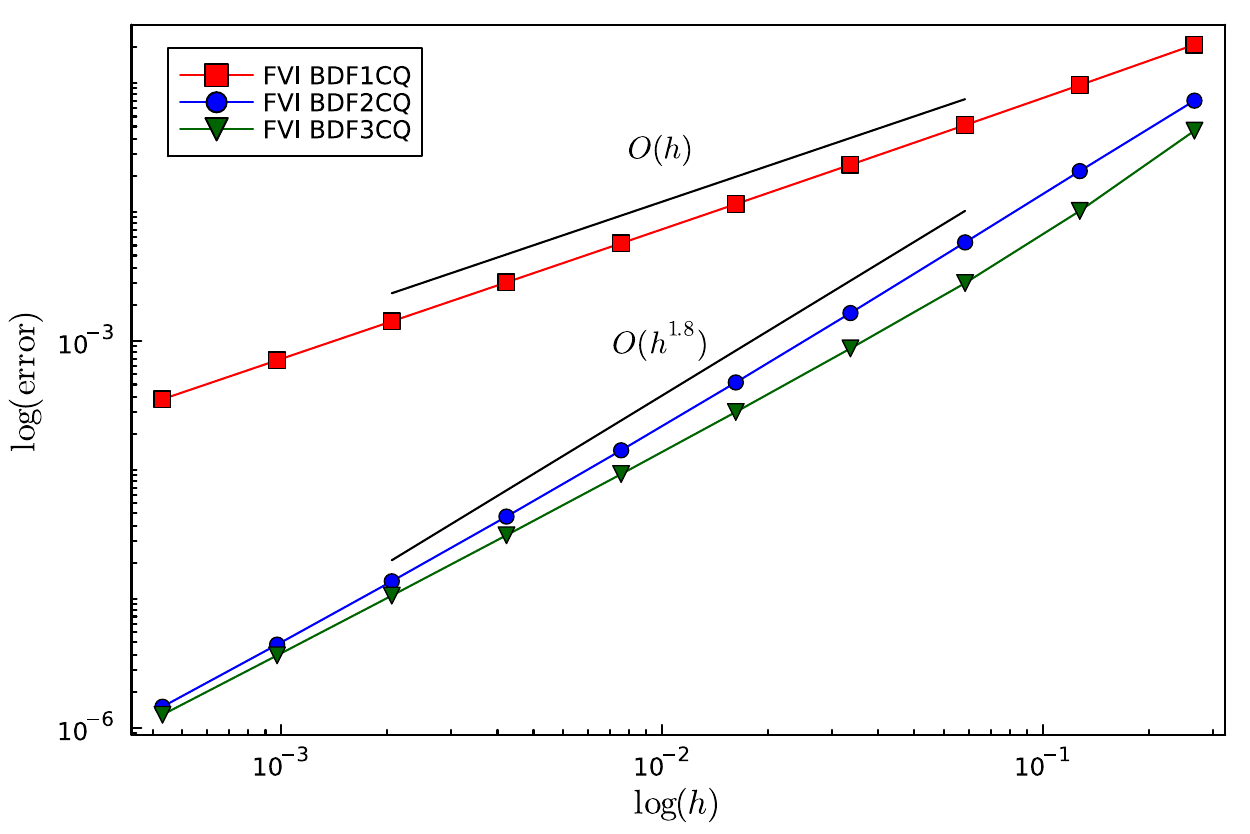}
\caption{Bagley-Torvik equation \eqref{eq:torvik}. Log-Log plot of the global errors on  $t\in [0,1]$ for  $h = 1/2^i,\ i=1,\ldots,8$. Left: case \eqref{torvik1}. Right: case \eqref{torvik2}.}
\label{fig:Torik}
\end{figure}

Again, it can be observed from Figure \ref{fig:Torik} that FVI-BDF1CQ  leading to a convergence of order one. A convergence of order $2$ for FVI-BDF2CQ and FVI-BDF3CQ is obtained (left plot) whereas a convergence of order cannot reach two for FVI-BDF2CQ and FVI-BDF3CQ (right plot).

We summarize the convergence order of  \eqref{DiscFracDamp:a}  for equations \eqref{eq:damped-oscillator}  and \eqref{eq:torvik}  in Table \ref{tab:1} where we consider the midpoint integrator for the conservative part.

\begin{table}[H]
    \centering
    {\renewcommand{\arraystretch}{1.5}%
\setlength{\tabcolsep}{12pt}
\begin{tabular}{|l|ccc|}
\hline
&BDF1CQ &BDF2CQ  & BDF3CQ\\  \hline
Damped oscillator ($\alpha=1/2$)& order 1 & order 2  & order 2\\
Baglay-Torvik ($\alpha=1/4$)& order 1 & order 2  & order 2\\
Baglay-Torvik ($\alpha=3/4$)& order 1 & order 1.8  & order 1.8\\
\hline
\end{tabular}}
    \caption{Convergence order of  \eqref{DiscFracDamp:a}  for equations \eqref{eq:damped-oscillator}  and \eqref{eq:torvik}.}
    \label{tab:1}
\end{table}

\section{Higher-order fractional variational integrators bosed on convolution quadrature}\label{FVI-CQ}

Now, we establish a particular discretization of the action \eqref{FracAction}, where we choose a higher-order approximation with quadrature rule $(b_i,c_i)_{i=1}^r$ (\S\ref{HO-Action}) for the conservative part $\Sc$ and convolution quadrature \eqref{DiscIntMinus}, \eqref{DiscIntPlus} for the fractional integrals involved in $\Sf$ instead of the order 1 \eqref{Order1CQ}. For that, we take into account two discrete series $x_d=\lc x_k\rc_{0:N}\in(\R^d)^{N+1}$, $y_d=\lc y_k\rc_{0:N}\in(\R^d)^{N+1}$ and $s+1$ inner nodes $\lc x_k^{\nu}\rc^{0:s}\in (\R^d)^{s+1}$ in each interval $[k,k+1]$ such that $x_k^s=x_{k+1}^0$ (equiv. for $y$). Namely
\begin{equation}\label{DiscFracActionHO}
\begin{split}
\Sg(x_d,y_d)=\Sc_d&(x_d,y_d)+\Sf_d(x_d,y_d), \\
\Sc_d(x_d,y_d)=\sum_{k=0}^{N-1}(L_d(x_k)+L_d(y_k)),&\quad \Sf_d(x_d,y_d)=-\mu\,h\,\,\sum_{k=0}^{N}\mathcal{J}^{-\alpha}_{-}x_k\,\mathcal{J}^{-\beta}_{+}y_k,\\
L_d(x_k)=h\sum_{i=1}^rb_iL(x_d(c_i\,h;k),\,\dot x_d(c_i\,h;k)),&\,\,\,\,\,\, L_d(y_k)=h\sum_{i=1}^rb_iL(y_d(c_i\,h;k),\dot y_d(c_i\,h;k)),
\end{split}
\end{equation}
where the definition \eqref{Polynomials} applies for $x_d(t;k)$ and $y_d(t;k)$ just by $Q=\R^d$. Now, considering restricted varied curves
\begin{equation}\label{RestrictedVariedInner}
x_d^{\epsilon}=\lc x_k^{\nu}\rc_{0:N-1}^{0:s}+\epsilon\, \lc\delta x_k^{\nu}\rc_{0:N-1}^{0:s},\,\,\,y_d^{\epsilon}=\lc y_k^{\nu}\rc_{0:N-1}^{0:s}+\epsilon\, \lc\delta x_k^{\nu}\rc_{0:N-1}^{0:s},
\end{equation}
such that $\delta x_0=\delta x_0^0=0$ and $\delta x_N=\delta x_{N-1}^s=0$, we establish the following result (it is important to recall that, from now on, we shall consider the variation operator as $\delta\equiv d/d\epsilon|_{\epsilon=0}$, applied over the ``varied'' quantities). Before the theorem, we set a useful lemma.

\begin{lemma}\label{VariationComm}
According to the definitions \eqref{DiscIntMinus}, \eqref{DiscIntPlus} and considering varied curves \eqref{RestrictedVariedInner}, we have that 
\[
\delta\, \mathcal{J}^{-\alpha}_{\lambda}x_k = \mathcal{J}^{-\alpha}_{\lambda}\,\delta x_k.
\]
Equivalently for $y$.
\end{lemma}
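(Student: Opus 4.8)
The plan is to observe that $\mathcal{J}^{-\alpha}_{\lambda}$ is, by its very definition \eqref{DiscIntMinus}--\eqref{DiscIntPlus}, a \emph{linear} operator acting on discrete series whose weights $\omega_n^{(-\alpha)}$ depend only on the step size $h$, the order $\alpha$ and the characteristic function $\gamma_p$ of \eqref{BDFgenfunction}, and in particular do \emph{not} depend on the perturbation parameter $\epsilon$. Since, moreover, the sums defining $\mathcal{J}^{-\alpha}_{-}x_k$ and $\mathcal{J}^{-\alpha}_{+}x_k$ are finite (truncated at $n=k$, resp.\ $n=N-k$), differentiation with respect to $\epsilon$ commutes with the summation trivially, term by term.

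Concretely, I would first recall that under the restricted varied curves \eqref{RestrictedVariedInner} the interval endpoints on which the discrete convolution in $\Sf_d$ acts vary affinely, $x_k^{\epsilon}=x_k+\epsilon\,\delta x_k$, with the usual identification $x_k\equiv x_k^0$ and $\delta x_k\equiv\delta x_k^0$. Then, for $\lambda=-$, by \eqref{DiscIntMinus},
$$
\mathcal{J}^{-\alpha}_{-}x_k^{\epsilon}=\sum_{n=0}^{k}\omega_n^{(-\alpha)}\,x_{k-n}^{\epsilon}=\sum_{n=0}^{k}\omega_n^{(-\alpha)}\bigl(x_{k-n}+\epsilon\,\delta x_{k-n}\bigr),
$$
so that applying $\delta=\dfrac{d}{d\epsilon}\Big|_{\epsilon=0}$ and using the $\epsilon$-independence of the $\omega_n^{(-\alpha)}$ gives
$$
\delta\,\mathcal{J}^{-\alpha}_{-}x_k=\sum_{n=0}^{k}\omega_n^{(-\alpha)}\,\delta x_{k-n}=\mathcal{J}^{-\alpha}_{-}\,\delta x_k .
$$
The case $\lambda=+$ is obtained verbatim after replacing the truncation index $k$ by $N-k$ and $x_{k-n}$ by $x_{k+n}$ in \eqref{DiscIntPlus}, and the computation for $y$ is identical.

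There is essentially no analytic difficulty here; the only two points deserving a word of care are: (i) making explicit that the convolution weights carry no $\epsilon$-dependence — this is exactly why we work with the pure convolution form \eqref{DiscIntMinus}--\eqref{DiscIntPlus} rather than with the starting quadrature \eqref{StartingQuad}, for which the argument would of course still hold but the statement is cleanest in the unmodified convolution setting; and (ii) matching the indexing in the higher-order framework, i.e.\ recalling that $\mathcal{J}^{-\alpha}_{\lambda}$ in \eqref{DiscFracActionHO} is evaluated on the boundary nodes $x_k=x_k^0$, so the relevant variations are precisely the $\delta x_k^0$ appearing in \eqref{RestrictedVariedInner}. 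With these observations the identity follows from the one-line computation displayed above.
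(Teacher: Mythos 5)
Your proof is correct and follows essentially the same route as the paper's: expand $\mathcal{J}^{-\alpha}_{-}x_k^{\epsilon}$ via \eqref{DiscIntMinus}, use the $\epsilon$-independence of the weights $\omega_n^{(-\alpha)}$ and differentiate the finite sum term by term at $\epsilon=0$, noting as the paper does that only the main nodes $x_k=x_k^0$ (not the inner nodes) enter the convolution quadrature. Nothing is missing.
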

\begin{proof}
We pick $\sigma=-$, the proof for $+$ is equivalent. It is important to remark that in the convolution quadrature \eqref{DiscIntMinus} the inner nodes are not involved, and consequently from \eqref{RestrictedVariedInner} we only take into consideration the main nodes, i.e.~ $x_d^{\epsilon}=\lc x_k\rc_{0:N}+\epsilon\, \lc\delta x_k\rc_{0:N}$.
\[
\begin{split}
\delta\, \mathcal{J}^{-\alpha}_{-}x_k=\frac{d}{d\epsilon}\Big|_{\epsilon=0}\, \mathcal{J}^{-\alpha}_{-}x_d^{\epsilon}=\frac{d}{d\epsilon}\Big|_{\epsilon=0} \sum_{n=0}^{k}\omega_n^{(-\alpha)}(x_{k-n}+\epsilon\,\delta x_{k-n})=\sum_{n=0}^{k}\omega_n^{(-\alpha)}\delta x_{k-n}=\mathcal{J}^{-\alpha}_{-}\,\delta x_k.
\end{split}
\]
\end{proof}

\begin{theorem}\label{TheoremCQ}
The equations 
\begin{subequations}\label{DELHO}
\begin{align}
&D_{s+1}L_d(x_{k-1}^0, \ldots,x_{k-1}^s)+ D_1L_d(x_k^0, \ldots,x_k^s)-\mu\,h\,\mathcal{J}^{-(\alpha+\beta)}_{-}x_k^0=0,\qquad k=1,\ldots, N-1,\label{DELHOx:a}\\
&D_iL_d(x_k^0, \ldots,x_k^s)=0,\,\hspace{4.3cm}\,\,\quad k=0,\ldots, N-1,\,\quad i=2,\ldots,s,\label{DELHOx:b}\\
&D_{s+1}L_d(y_{k-1}^0, \ldots,y_{k-1}^s)+ D_1L_d(y_k^0, \ldots,y_k^s)-\mu\,h\,\mathcal{J}^{-(\alpha+\beta)}_{+}y_k^0=0,\qquad k=1,\ldots, N-1,\label{DELHOy:a}\\
&D_iL_d(y_k^0, \ldots,y_k^s)=0,\,\hspace{4.3cm}\,\,\quad k=0,\ldots, N-1,\qquad i=2,\ldots,s, \label{DELHOy:b}
\end{align}
\end{subequations}
are  sufficient conditions for the extremals of \eqref{DiscFracActionHO} under restricted calculus of variations \eqref{RestrictedVariedInner}.
\end{theorem}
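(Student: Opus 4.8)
\emph{Proof plan.} The strategy is to compute the first variation $\delta\Sg(x_d,y_d)=\delta\Sc_d(x_d,y_d)+\delta\Sf_d(x_d,y_d)$ along the restricted varied curves \eqref{RestrictedVariedInner}, reduce it to a sum over the independent node variations $\delta x_k^\nu$, and check that \eqref{DELHO} forces every coefficient to vanish; since there is only a single family of variations shared by the two copies $x_d,y_d$, this yields \emph{sufficiency} rather than necessity, exactly as in Theorems~\ref{ContTheo} and \ref{DiscTheo}.

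\emph{Conservative part.} I would first treat $\Sc_d(x_d,y_d)=\sum_{k=0}^{N-1}(L_d(x_k)+L_d(y_k))$ exactly as in the higher-order discrete variational calculus of \S\ref{HO-Action} (cf.\ \cite{MaWe,SinaSaake}). Writing $\delta L_d(x_k)=\sum_{\nu=0}^{s}D_{\nu+1}L_d(x_k^0,\ldots,x_k^s)\,\delta x_k^\nu$, summing over $k$, and using the transition constraint $x_{k-1}^{s}=x_k^{0}$ (so the $\nu=s$ slot of $[t_{k-1},t_k]$ and the $\nu=0$ slot of $[t_k,t_{k+1}]$ are the \emph{same} node), one telescopes the main-node contribution into $\sum_{k=1}^{N-1}\big(D_{s+1}L_d(x_{k-1})+D_1L_d(x_k)\big)\delta x_k^{0}$, the terms at $k=0$ and $k=N$ dropping out by $\delta x_0^{0}=\delta x_{N-1}^{s}=0$, while the inner nodes leave $\sum_{k=0}^{N-1}\sum_{i=2}^{s}D_iL_d(x_k)\,\delta x_k^{i-1}$. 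The identical computation for the $y$-sum, together with the fact that \eqref{RestrictedVariedInner} uses the \emph{same} $\delta x_k^\nu$ for $x_d$ and $y_d$, shows that the coefficient of each $\delta x_k^\nu$ in $\delta\Sc_d$ is an ``$x$-term'' plus the analogous ``$y$-term''.

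\emph{Fractional part.} For $\Sf_d(x_d,y_d)=-\mu h\sum_{k=0}^{N}\mathcal{J}^{-\alpha}_{-}x_k\,\mathcal{J}^{-\beta}_{+}y_k$, Lemma~\ref{VariationComm} lets the variation pass through the convolution operators, so by the product rule and $\delta y_k=\delta x_k$,
\[
\delta\Sf_d=-\mu h\sum_{k=0}^{N}\Big[(\mathcal{J}^{-\alpha}_{-}\delta x_k)\,\mathcal{J}^{-\beta}_{+}y_k+\mathcal{J}^{-\alpha}_{-}x_k\,(\mathcal{J}^{-\beta}_{+}\delta x_k)\Big].
\]
To the first sum I would apply the asymmetric summation by parts \eqref{AsymmetricInt:1} (with $f_k=\delta x_k$, $g_k=\mathcal{J}^{-\beta}_{+}y_k$) to obtain $\sum_k(\mathcal{J}^{-\alpha}_{+}\mathcal{J}^{-\beta}_{+}y_k)\,\delta x_k$, and to the second sum the same identity read from right to left with the two factors exchanged, to obtain $\sum_k\delta x_k\,(\mathcal{J}^{-\beta}_{-}\mathcal{J}^{-\alpha}_{-}x_k)$; the semigroup property Lemma~\ref{ConvProperties}(1) then collapses these to $\mathcal{J}^{-(\alpha+\beta)}_{+}y_k$ and $\mathcal{J}^{-(\alpha+\beta)}_{-}x_k$ respectively. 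Here one uses that both statements of Lemma~\ref{ConvProperties} are purely algebraic consequences of \eqref{DiscIntMinus}, \eqref{DiscIntPlus} and hold verbatim with the negative indices $-\alpha,-\beta$ appearing here. Dropping the $k=0,N$ terms (where $\delta x_k=0$), this gives $\delta\Sf_d=-\mu h\sum_{k=1}^{N-1}\big(\mathcal{J}^{-(\alpha+\beta)}_{-}x_k+\mathcal{J}^{-(\alpha+\beta)}_{+}y_k\big)\delta x_k$.

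\emph{Conclusion and main obstacle.} Adding the two contributions, $\delta\Sg$ becomes a linear combination of the independent variations in which the coefficient of $\delta x_k^{0}$ ($1\le k\le N-1$) equals $\big(D_{s+1}L_d(x_{k-1})+D_1L_d(x_k)-\mu h\,\mathcal{J}^{-(\alpha+\beta)}_{-}x_k\big)+\big(D_{s+1}L_d(y_{k-1})+D_1L_d(y_k)-\mu h\,\mathcal{J}^{-(\alpha+\beta)}_{+}y_k\big)$, and the coefficient of each inner $\delta x_k^{i-1}$ equals $D_iL_d(x_k)+D_iL_d(y_k)$. If \eqref{DELHOx:a}--\eqref{DELHOy:b} all hold, each parenthesised ``$x$'' and ``$y$'' block vanishes separately, hence $\delta\Sg(x_d,y_d)=0$ for every admissible restricted variation, which is the claim. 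I expect the only real difficulty to be the bookkeeping: keeping the transition conditions $x_{k-1}^{s}=x_k^{0}$ straight through the telescoping of the Galerkin term and chaining the two summation-by-parts steps with the semigroup identity in the correct order (in particular getting the $-$/$+$ labels right, so that $x$ is paired with $\mathcal{J}^{-(\alpha+\beta)}_{-}$ and $y$ with $\mathcal{J}^{-(\alpha+\beta)}_{+}$); no genuinely new ingredient beyond Lemmas~\ref{ConvProperties} and \ref{VariationComm} is required.
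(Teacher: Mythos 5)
Your proposal is correct and follows essentially the same route as the paper's proof: split $\delta\Sg$ into conservative and fractional parts, use Lemma~\ref{VariationComm} together with the discrete asymmetric summation by parts and the semigroup property to rewrite $\delta\Sf_d$ as $-\mu h\sum_{k=1}^{N-1}\big(\mathcal{J}^{-(\alpha+\beta)}_{-}x_k+\mathcal{J}^{-(\alpha+\beta)}_{+}y_k\big)\delta x_k^0$, telescope the Galerkin terms via $x_{k-1}^s=x_k^0$ and the endpoint conditions, and conclude that \eqref{DELHO} makes every coefficient vanish, giving sufficiency since the variations are shared. Your handling of the boundary/transition bookkeeping is, if anything, stated more explicitly than in the paper.
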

 
\begin{proof}
From \eqref{DiscFracActionHO} we have that
\[
\delta \mathcal{L}_d(x_d,y_d)=\delta\,\Sc_d(x_d,y_d)+\delta\,\Sf_d(x_d,y_d).
\]
Let start to simplify $\delta\,\Sc_d(x_d,y_d)$
    \[
\begin{split}
\delta\,\Sc_d(x_d,y_d)=\sum_{k=0}^{N-1}\lp\frac{\der L_d(x_k)}{\der x_k^{\nu}}+\frac{L_d(y_k)}{\der y_k^{\nu}}\rp\,\delta x_k^{\nu},
\end{split}
\]
where the summation over $\nu$ is understood and we have employed the restricted variations \eqref{RestrictedVariedInner}. 

\medskip

Concerning the term $\delta\,\Sf_d(x_d,y_d)$ we have,  let us use the notation

\begin{equation}\label{Notat}
\frac{\der L_d(x_k)}{\der x_k^{\nu}}\delta x_k^{\nu}=D_iL_d(x_k)\,\delta x_k^{\nu_i},
\end{equation}
where on the left hand side $\nu=0, \ldots,s$ and on the right hand side $\nu_1=0,\,\nu_2=1,\, \ldots,\nu_{s+1}=s$ (in other words $D_i=\der/\der x_k^{\nu_i}$); this way, it is highlighted that $L_d$ is a function of $s+1$ variables (equiv. for $y$). Thus, we have  
\[
\begin{split}
\delta\,\Sc_d(x_d,y_d)=\sum_{k=0}^{N-1}\sum_{i=1}^{s+1}\lp D_iL_d(x_k)+D_iL_d(y_k)\rp\,\delta x_k^{\nu_i}.
\end{split}
\]

\medskip

\[
\begin{split}
\delta\,\Sf_d(x_d,y_d)=^1&-\mu\,h\,\,\sum_{k=0}^{N}\mathcal{J}^{-\alpha}_{-}x_k\,\mathcal{J}^{-\beta}_{+}\delta\, x_k -\mu\,h\,\,\sum_{k=0}^{N}\mathcal{J}^{-\alpha}_{-}\delta\, x_k\,\mathcal{J}^{-\beta}_{+}y_k\\
=^2&-\mu\,h\,\,\sum_{k=0}^{N}\mathcal{J}^{-\beta}_{-}\mathcal{J}^{-\alpha}_{-}x_k\,\delta\, x_k -\mu\,h\,\,\sum_{k=0}^{N}\delta\, x_k\,\mathcal{J}^{-\alpha}_{+}\mathcal{J}^{-\beta}_{+}y_k\\
=^3&-\mu\,h\,\,\sum_{k=1}^{N-1}\lp \mathcal{J}^{-(\beta+\alpha)}_{-}x_k^0+\mathcal{J}^{-(\alpha+\beta)}_{+}y_k^0\rp\,\delta\, x_k^0. 
\end{split}
\]
In $=^1$ we have employed the Leibnitz rule of the derivative and Lemma \ref{VariationComm}. In $=^2$ we have employed the asymmetric integration by parts, i.e.~ property (2) in Lemma \ref{ConvProperties}. Finally, in $=^3$ we have rearranged terms, employed the semigroup property (1) in Lemma \ref{ConvProperties}, taken into account that $x_k=x_k^0$, $y_k=y_k^0$ and $\delta\, x_k=\delta\,x_k^0$ in terms of the inner nodes and taken also into account that  $\delta x_0=\delta x_N=0$, such that the terms $k=0$ and $k=N$ vanish.

\medskip

Putting everything together we have

\[
\begin{split}
\delta\, \mathcal{L}_d(x_d,y_d)=&\sum_{k=0}^{N-1}\sum_{i=1}^{s+1}\lp D_iL_d(x_k)+D_iL_d(y_k)\rp\,\delta x_k^{\nu_i}-\mu\,h\,\,\sum_{k=1}^{N-1}\lp \mathcal{J}^{-(\alpha+\beta)}_{-}x_k^0+\mathcal{J}^{-(\alpha+\beta)}_{+}y_k^0\rp\,\delta\, x_k^0\\
=&\sum_{k=0}^{N-1}\sum_{i=2}^{s+1} D_iL_d(x_k)\,\delta x_k^{\nu_i}+\sum_{k=1}^{N-1}\lp D_1L_d(x_k)-\mu\,h\,\mathcal{J}^{-(\alpha+\beta)}_{-}x_k^0\rp\delta\, x_k^0\\
+&\sum_{k=0}^{N-1}\sum_{i=2}^{s+1} D_iL_d(y_k)\,\delta x_k^{\nu_i}+\sum_{k=1}^{N-1}\lp D_1L_d(y_k)-\mu\,h\,\mathcal{J}^{-(\alpha+\beta)}_{+}y_k^0\rp\delta\, x_k^0\\
\end{split}
\]
where is taken into account that $\delta x_0=\delta x_0^0=0$ and that $D_{s+1}L_d(x_{k-1})=D_1L_d(x_k).$ Now, given that $\delta x_k^{\nu_i}$ are arbitrary for $k=0,\ldots,N-1$, $i=1,\ldots,s+1$ (except $\delta x_0$), we see from the last equality that
\[
\begin{split}
&D_iL_d(x_k)=0,\,\hspace{5cm}\,\,\quad k=0,\ldots, N-1,\,\quad i=2,\ldots,s,\\
&D_{s+1}L_d(x_{k-1})+ D_1L_d(x_k)-\mu\,h\,\mathcal{J}^{-(\alpha+\beta)}_{-}x_k=0,\,\,\,\,\,\, k=1,\ldots, N-1,\\
&D_iL_d(y_k)=0,\,\hspace{5cm}\,\,\quad k=0,\ldots, N-1,\,\quad i=2,\ldots,s,\\
&D_{s+1}L_d(y_{k-1})+ D_1L_d(y_k)-\mu\,h\,\mathcal{J}^{-(\alpha+\beta)}_{+}y_k=0,\,\,\,\,\,\,\, k=1,\ldots, N-1,
\end{split}
\]
is a sufficient condition for $\delta\, \mathcal{L}_d(x_d,y_d)=0$ and the claim holds.
\end{proof}

\begin{remark}
The partial derivatives of $L_d(x_k)$ (equiv. $y$) are completely determined by the quadrature rule $(b_i,c_i)_{i=1}^r$ and the Lagrangian function $L(q,\dot q)$, according to \eqref{DiscFracActionHO} and \eqref{Polynomials}. Namely
\[
\begin{split}
\frac{\der L_d(x_k)}{\der x_k^{\nu}}=&h\sum_{i=1}^rb_i\lp\frac{\der L}{\der q}(x_d(c_i\,h;k),\,\dot x_d(c_i\,h;k))\frac{\der x_d}{\der x_k^{\nu}}+\frac{\der L}{\der\dot q}(x_d(c_i\,h;k),\,\dot x_d(c_i\,h;k))\frac{\der \dot x_d}{\der x_k^{\nu}}\rp\\
=&h\sum_{i=1}^rb_i\lp\frac{\der L}{\der q}(x_d(c_i\,h;k),\,\dot x_d(c_i\,h;k))\,\ell_{\nu}(c_ih)+\frac{\der L}{\der\dot q}(x_d(c_i\,h;k),\,\dot x_d(c_i\,h;k))\,\frac{1}{h}\dot \ell_{\nu}(c_ih)\rp.
\end{split}
\]
\end{remark}

Naturally, equations \eqref{DELHOx:a},\eqref{DELHOx:b} can be employed as a discrete iteration scheme for the dynamics \eqref{ContFracDamp:a}, the same way \eqref{DELHOy:a},\eqref{DELHOy:b} can be used for \eqref{ContFracDamp:b}. We shall focus on the $x$-part, since $y$ is equivalent. 

The equations
\[
\begin{split}
p_{x_0}:=&-D_1L_d(x_0^0, \ldots,x_0^s),\\
0=&\,\,\,\,\,\,\,D_iL_d(x_k^0, \ldots,x_k^s),\quad \forall\,i=2,\ldots,s,\,\hspace{4cm}\,\,\,\quad k=0,\ldots, N-1,\\
0=&\,\,\,\,\,\,\,D_{s+1}L_d(x_{k-1}^0, \ldots,x_{k-1}^s)+ D_1L_d(x_k^0, \ldots,x_k^s)-\mu\,h\,\mathcal{J}^{-(\alpha+\beta)}_{-}x_k^0,\,\,\,\,\,\,\,\,\,\,\,\, k=1,\ldots, N-1,
\end{split}
\]
(where we include the initial momentum $p_{x_0}:=-D_1L_d(x_0^0, \ldots,x_0^s)$ as a definition \footnote{Naturally, this definition is based on the Hamiltonian version of discrete mechanics, which can be consulted for conservative systems in \cite{MaWe}, and for the particular case of fractional damping in \cite{JiOb2}. We do not enter here in further details since it is offtopic.})
conform a discrete iteration scheme 
\[
\begin{split}
x_0^0&\,\,\mapsto\,\, (x_0^0, \ldots,x_0^s=x_1^0),\\
(x_{k-1}^0, \ldots,x_{k-1}^s=x_k^0)&\,\,\mapsto\,\,(x_{k}^0, \ldots,x_{k}^s=x_{k+1}^0),\quad k=1, \ldots,N-1,
\end{split}
\]
that can be represented  as an algorithm:

\begin{algorithm}{}
\begin{algorithmic}[1]
\State {\bf Initial data}: $N,\, h,\,\alpha,\,\beta\,,\,\omega_n^{(-(\alpha+\beta))},\,\mu,\, x_0^0,\, p_{x_0}.$
 \State {\bf solve for} $x_0^1, \ldots,x_0^s$ {\bf from} 

\[
\begin{split}
p_{x_0}=&-D_1L_d(x_0^0, \ldots,x_0^s),\\
0=&\,\,\,\,\,\,\,D_iL_d(x_0^0, \ldots.,x_0^s),\,\,\quad \forall\,i=2, \ldots,s.
\end{split}
\]
\State {\bf Initial points:} $x_0^0, \ldots,x_0^s=x_1^0$
    \For {$k= 1: N-1$} 
    
\hspace{-0.6cm} {\bf solve for} $x_{k}^1, \ldots,x_k^s=x_{k+1}^0$ {\bf from} 
\[
\begin{split}
0=&\,\,D_{s+1}L_d(x_{k-1}^0, \ldots,x_{k-1}^s)+ D_1L_d(x_k^0, \ldots,x_k^s)-\mu\,h\,\sum_{n=0}^k\omega_n^{(-(\alpha+\beta))}x_{k-n}^0,\\
0=&\,\,D_iL_d(x_k^0, \ldots,x_k^s),\quad\quad \forall\,i=2, \ldots,s.
\end{split}
\]
    \EndFor
    \State  {\bf Output:} $(x_1^{\nu}, \ldots,x_{N-1}^{\nu}),\,\,\,\nu=0, \ldots,s.$
\end{algorithmic}
\caption{Higher-order fractional variational integrator   (with convolution quadrature)}\label{alg:FractionalAlgorithm}

  \end{algorithm}

It is important to remark that at each step a nonlinear system of $s$ algebraic equations is solved in order to obtain the $s$ unknowns $(x_k^1, \ldots,x_k^s=x_{k+1}^0)$, even in the initialization step.

\subsection{Numerical experiment}  We will employ a variational integrators of order $4$ for the conservative part based on two points Gauss quadrature  and a  polynomial degree $2$ (see \cite{SinaSaake}) and BDFCQ for the fractional one. To simplify notation, we continue to write FVI-BDFCQ (or only FVI  when it is convenient) for equations \ref{DELHOx:a} and \ref{DELHOx:b} and  the  numerical solution can be computed using  Algorithm \ref{alg:FractionalAlgorithm}.\\

Let us consider the previous  examples as in \S \ref{subsec:experiment}.  
As expected, we notice in Figures \ref{fig:FVIcovergence-integer1} and \ref{fig:Torik2} that FVI-BDF1CQ and  FVI-BDF2CQ are of first and second order, respectively. From the
numerical point of view one would expect a higher accuracy (order 3 using BDF3CQ mixed with third order variational integrator) which we do not get. One possible reason might be mixing of integrators (VI and BDFCQ). In particular,  BDFCQ depends only the  main nodes but not on the inner nodes which are  considered in the conservative part. Another possible reason might be saturation effects coming from CQ as described in \S \ref{ConQua}.

\begin{figure}[H]
\centering
\includegraphics[width=.48\textwidth]{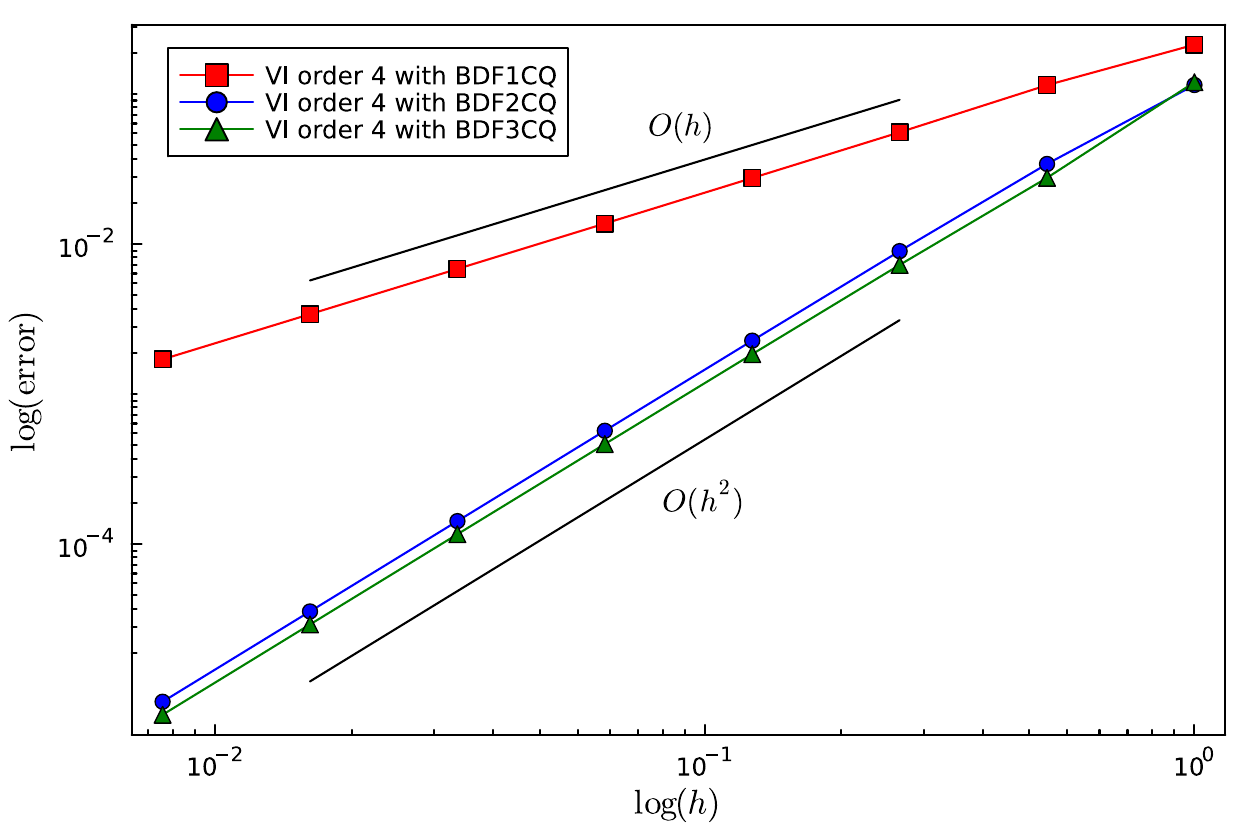}
\caption{Damped harmonic oscillator \eqref{eq:damped-oscillator}.  Log-Log plot of the global error presented on $t\in [0,16]$ for $h =16/2^i,\ i=4,\ldots,11$.}
\label{fig:FVIcovergence-integer1}
\end{figure}

\begin{figure}[H]
\centering
\includegraphics[width=.48\textwidth]{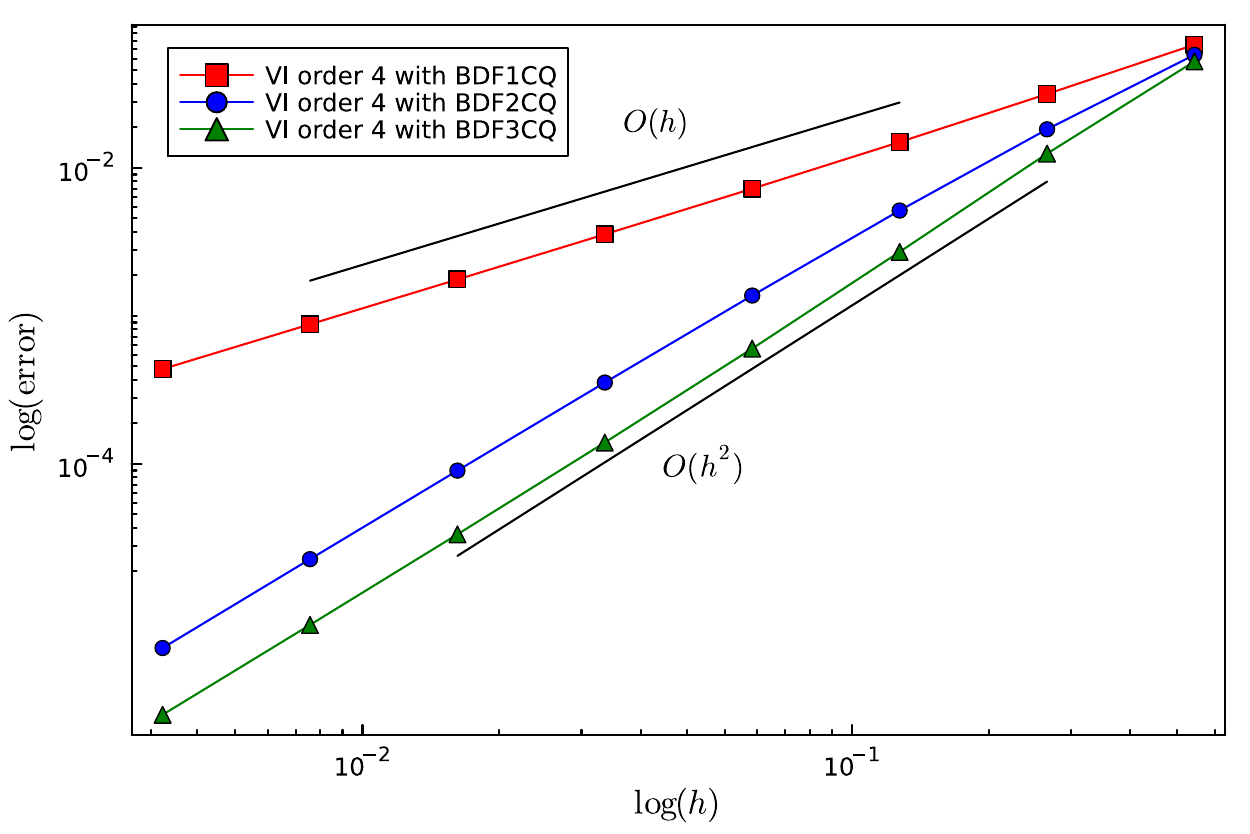}
\hfill
\includegraphics[width=.48\textwidth]{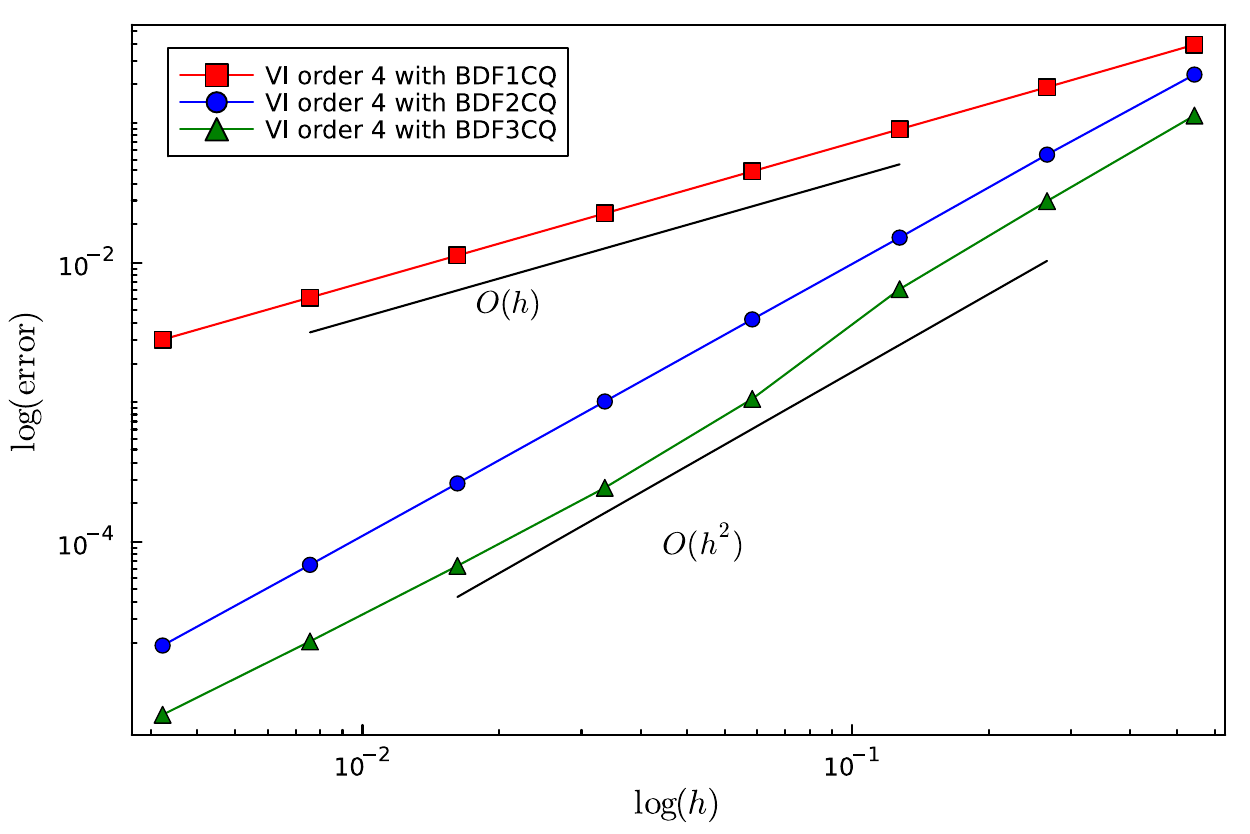}
\caption{Bagley-Torvik equation \eqref{eq:torvik}. Log-Log plot of the global error on  $t\in [0,1]$ for  $h = 1/2^i,\ i=1,\ldots,8$. Left: case \eqref{torvik1}. Right: case \eqref{torvik2}.}
\label{fig:Torik2}
\end{figure}

As we have seen in \S\ref{ConQua}, the main issue of  using BDFCQ for certain class of solution functions is that  one cannot achieve a high accuracy, see the saturation effects in Figures \ref{Saturation}. However, a correction term should be added as in \eqref{StartingQuad} to recover the order of accuracy as the one of the underlying BDF methods. We apply BDF3CQ with  a correction term in Algorithm \ref{alg:FractionalAlgorithm} for equation \eqref{eq:torvik} when $\alpha=3/4$ and as  we observe in Figure \ref{fig:Torik3},  the third order accuracy is almost achieved. However, this phenomenon does not work with the previous studied cases which  seems related to the accumulation of  errors. 

\begin{figure}[H]
\centering
\includegraphics[width=.48\textwidth]{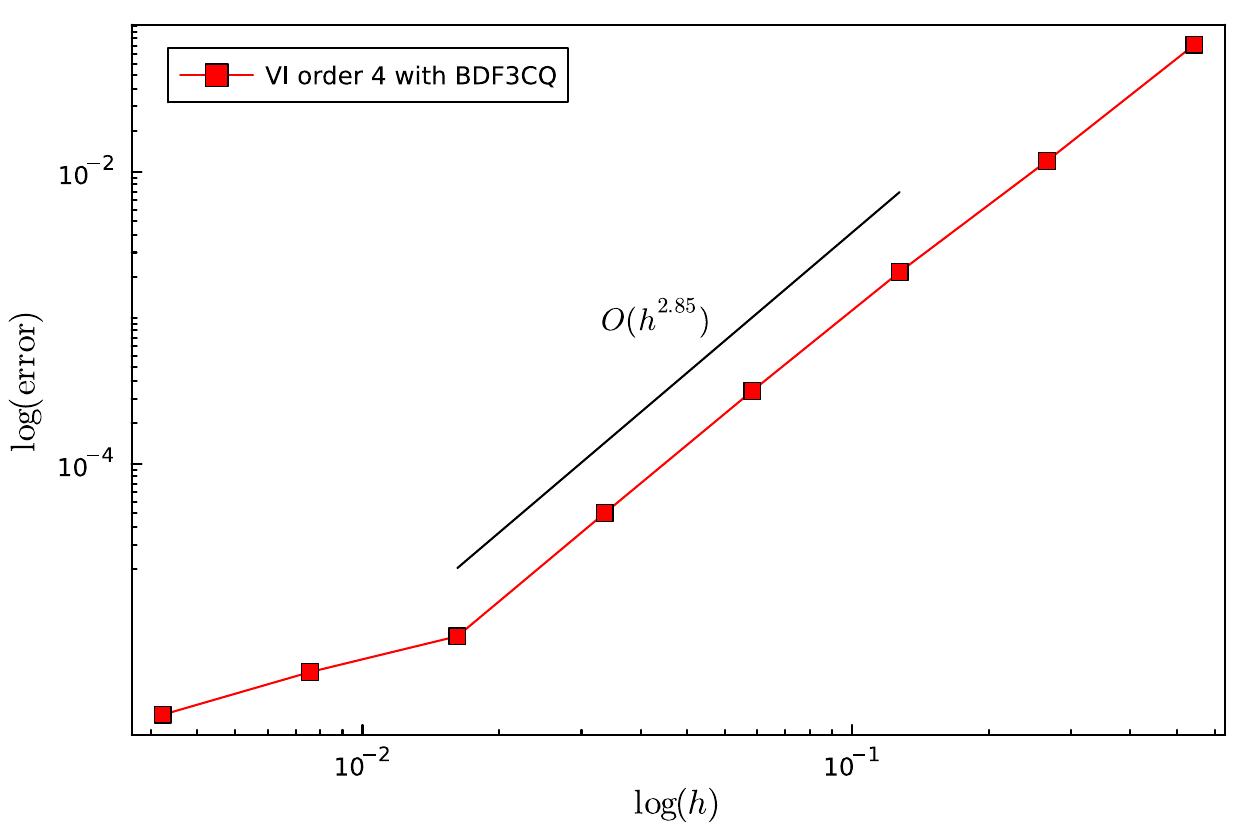}
\caption{Bagley-Torvik equation \eqref{eq:torvik},  case \eqref{torvik2}, i.e.~ $\alpha=3/4$. Log-Log plot of the global error on  $t\in [0,1]$ for  $h = 1/2^i,\ i=1,\ldots,8$.}
\label{fig:Torik3}
\end{figure}


We summarize the convergence order of  \eqref{DiscFracDamp:a}  for equations \eqref{eq:damped-oscillator}  and \eqref{eq:torvik}  in Table \ref{tab:2} where we consider an integrator of order $4$ for the conservative part. 

\begin{table}[H]
    \centering
    {\renewcommand{\arraystretch}{1.5}%
\setlength{\tabcolsep}{12pt}
\begin{tabular}{|l|ccc|}
\hline
&BDF1CQ &BDF2CQ  & BDF3CQ\\  \hline
Damped oscillator ($\alpha=1/2$)& order 1 & order 2  & order 2\\
Baglay-Torvik ($\alpha=1/4$)& order 1 & order 2  & order 2\\
Baglay-Torvik ($\alpha=3/4$)& order 1 & order 2  & order 2\\
\hline
\end{tabular}}
    \caption{Convergence order of  \eqref{DiscFracDamp:a}  for equations \eqref{eq:damped-oscillator}  and \eqref{eq:torvik}.}
    \label{tab:2}
\end{table}

\section{Conclusions}
A restricted Hamilton's principle is a new class of fractional calculus of variations  has been introduced  in \cite{JiOb1,JiOb2}.  
The main motivation of this approach is to derive the dynamics of fractionally damped systems \eqref{eq:FV} using a purely variational way and hence  to construct the so-called fractional variational integrators (FVIs). \\

We have developed FVIs  that combine  the convolution quadrature (CQ), which is particularly suitable  \cite{Lubich1,Lubich2} in the framework of the restricted Hamilton's principle, with the variational integrators \cite{MaWe,SinaSaake,HaLe13}. Our result coincides, in particular, with the one given in \cite{JiOb2} when using the classical variational integrators.\\

This work centers around increasing the accuracy the numerical scheme associated to  \eqref{eq:FV}. Here,  we have focused on implementing the FVIs  and  test numerically  their accuracy using two mechanical systems, the damped harmonic oscillator and the Bagley-Torvik problems. We notice that for FVI based on BDFCQ, it can only achieve the second-order accuracy even for a higher-order FVI (see Figures \ref{fig:FVIcovergence-integer1} and \ref{fig:Torik2}) which is due to the fact that  saturation effects are also a part of the problem. In this situation,  with the use of correction term, the third-order accuracy for FVI-BDF3CQ is  observed in Figure \eqref{fig:Torik3}.\\

To overcome the problem of limitation with this strategy i.e.~  in order to obtain the same order of convergence of the underlying BDF methods,   it will be necessary to take a correction term  in to account which is  difficult, in general,  to deal with for some values of $\alpha$ and  further  errors  are arising  from  solving the linear systems of  the starting quadrature  \cite{Dlhm}.\\

Another problem of using  BDFCQ is that, the inner nodes used in a higher-order approximation for the conservative action \eqref{FracAction} are not taken into account in  BDFCQ for the fractional one. Thus, a way to handle this is to apply the high-order Runge–Kutta convolution quadrature (RKCQ) \cite{LuOs,LuBa} for the fractional part which will be a future work.

\newpage

\printbibliography

@article{Agra,
author = {Om P. Agrawal},
title = {Formulation of {E}uler–{L}agrange equations for fractional variational problems},
journal = {Journal of Mathematical Analysis and Applications},
volume = {272},
number = {1},
pages = {368-379},
year = {2002}
}

@article{Bateman,
  title = {On Dissipative Systems and Related Variational Principles},
  author = {Bateman, H},
  journal = {Phys. Rev.},
  volume = {38},
  pages = {815--819},
  year = {1931},
}

@Inbook{Ca13,
author="Campos, C{\'e}dric M",
title="High Order Variational Integrators: A Polynomial Approach",
bookTitle="Advances in Differential Equations and Applications",
year="2014",
publisher="Springer International Publishing",
pages="249--258",
editor="Casas, Fernando and Mart{\'i}nez, Vicente",
volume    = "4",
}

@article{Ca14,
author = {Cédric M. Campos and Sina Ober-Blöbaum and Emmanuel Trélat},
title = {High order variational integrators in the optimal control of mechanical systems},
journal = {Discrete and Continuous Dynamical Systems},
volume = {35},
number = {9},
pages = {4193--4223},
year = {2015}
}

@article{Cresson1,
author = {Loïc Bourdin and Jacky Cresson and Isabelle Greff and Pierre Inizan},
title = {Variational integrator for fractional {E}uler–{L}agrange equations},
journal = {Applied Numerical Mathematics},
volume = {71},
pages = {14--23},
year = {2013},
}

@article{Cresson,
author = {Jacky Cresson and Pierre Inizan},
title={Variational formulations
of differential equations and asymmetric fractional embedding},
journal = {Journal of Mathematical Analysis and Applications},
volume = {385},
number = {2},
pages = {975--997},
year = {2012},
}

@article{Galley,
  title = {Classical Mechanics of Nonconservative Systems},
  author = {Galley, Chad R.},
  journal = {Phys. Rev. Lett.},
  volume = {110},
  pages = {174301},
  year = {2013},
}

@book{TheBook,
   title =     {Fractional Integrals and Derivatives: Theory and Applications},
   author =    {Stefan G. Samko and Anatoly A. Kilbas and Oleg I. Marichev},
   publisher = {Gordon},
   year =      {1993},
   edition =   {1},
}

@book{TheBook2,
   title =     {Theory and Applications of Fractional Differential Equations},
   author =    {Anatoly A. Kilbas, Hari M. Srivastava and Juan J. Trujillo },
   publisher = {Elsevier},
   year =      {2006},
   series =    {North-Holland Mathematics Studies 204},
   edition =   {1},
   volume =    {},
}

@book{Oldham,
   title =     {The fractional calculus: Theory and Applications of Differentiation
and Integration to Arbitrary Order},
   author =    {Keith B. Oldham and Jerome Spanier},
   publisher = {Academic Press, New York, London},
   year =      {1974},
}

@article{Riewe,
  title = {Nonconservative Lagrangian and Hamiltonian mechanics},
  author = {Riewe, Fred},
  journal = {Phys. Rev. E},
  volume = {53},
  number= {2},
  pages = {1890--1899},
  year = {1996},
  publisher = {American Physical Society},
}

@article{SinaSaake,
  title = {Construction and analysis of higher order Galerkin variational integrators},
  author = {Ober-Blöbaum, Sina and Saake, Nils},
  journal = {Advances in Computational Mathematics},
  volume = {41},
  pages = {955--986},
  year = {2015}
}

@article{O14,
    author = {Ober-Blöbaum, Sina},
    title = "{Galerkin variational integrators and modified symplectic Runge–Kutta methods}",
    journal = {IMA Journal of Numerical Analysis},
    volume = {37},
    number = {1},
    pages = {375--406},
    year = {2016},
}

@article{MoVe,
    author = {Moser, Jürgen and Veselov, Alexander P},
    title = "{Galerkin variational integrators and modified symplectic Runge–Kutta methods}",
    journal = {IMA Journal of Numerical Analysis},
    volume = {139},
    pages = {217--243 },
    year = {1991},
}

@article{MaWe, 
author={Marsden, J. E. and West, M.},
title={Discrete mechanics and variational integrators},
volume={10},
journal={Acta Numerica},
publisher={Cambridge University Press},
year={2001}, 
pages={357--514}}

@ARTICLE{LuOs,
       author = {Lubich, C. and Ostermann, A.},
        title = "{Runge-Kutta methods for parabolic equations and convolution quadrature}",
      journal = {Mathematics of Computation},
         year = 1993,
       volume = {60},
       number = {201},
        pages = {105--131},
}

@ARTICLE{LuBa,
       author = {Banjai, Lehel and Lubich, Christian},
        title = "{RAn error analysis of Runge–Kutta convolution quadrature}",
      journal = {BIT Numerical Mathematics},
         year = 2011,
       volume = {51},
       number = {3},
        pages = {483--496},
}

@article{Lubich1,
author = {Lubich, C.},
title = {Discretized Fractional Calculus},
journal = {SIAM Journal on Mathematical Analysis},
volume = {17},
number = {3},
pages = {704--719},
year = {1986}
}

@ARTICLE{Lubich2,
       author = {Lubich, C},
        title = "{Convolution quadrature and discretized operational calculus. I and II}",
      journal = {Numerische Mathematik},
         year = {1988},
       volume = {52},
        pages = { 129--145 and  413--425},
}

@ARTICLE{Lubich3,
       author = {Lubich, C},
        title = {On the multistep time discretization of linear initial-boundary value problems and their boundary integral equations},
      journal = {Numerische Mathematik},
         year = {1994},
       volume = {67},
        pages = {365--389},
}

@ARTICLE{LubichF1,
       author = {Lubich, C},
        title = "{Fractional linear multistep methods for Abel–Volterra integral equations of
the second kind}",
      journal = {Math. Comp.},
         year = {1985},
       volume = {45},
        pages = { 463--469},
}

@ARTICLE{LubichF2,
       author = {Lubich, C},
        title = "{A stability analysis of convolution quadratures for Abel-Volterra integral
equations}",
      journal = {IMA J. Numer. Anal.},
         year = {1986},
       volume = {6},
        pages = { 87--101},
}

@article{SinaMats,
title = {Superconvergence of Galerkin variational integrators},
journal = {IFAC-PapersOnLine},
volume = {54},
number = {19},
pages = {pp .327--333},
year = {2021},
author = {Sina Ober-Blöbaum and Mats Vermeeren},
}

@article{Lavoie,
author = {Lovoie, J. L. and Osler, T. J. and Tremblay, R.},
title = {Fractional Derivatives and Special Functions},
journal = {SIAM Review},
volume = {18},
number = {2},
pages = {240--268},
year = {1976},
}

@article{Leok2011,
author = {Leok, Melvin and Shingel, Tatiana},
title = {General techniques for constructing variational integrators},
journal = {Frontiers of Mathematics in China},
volume = {7},
number = {2},
pages = {273--303},
year = {2012},
}

@book{Kai,
   author =    {Kai Diethelm},
   title =     {The Analysis of Fractional Differential Equations: An Application-Oriented Exposition Using Differential Operators of Caputo Type},
   publisher = {Springer-Verlag Berlin Heidelberg},
   year =      {2010},
   series =    {Lecture Notes in Mathematics 2004},
   edition =   {1},
}

@article{JiOb1,
title = {A Fractional Variational Approach for Modelling Dissipative Mechanical Systems:Continuous and Discrete Settings},
author = {Fernando Jiménez and Sina Ober-Blöbaum},
journal = {IFAC-PapersOnLine},
volume = {51},
number = {3},
pages = {50-55},
year = {2018}
}

@article{JiOb2,
author = {Jiménez, Fernando  and Ober-Blöbaum, Sina},
title = {Fractional Damping Through Restricted Calculus of Variations},
journal = {Journal of Nonlinear Science},
volume = {31},
pages = {46},
year = {2021}
}

@article{HaLe13,
author = {Hall, James  and Leok, Melvin},
title = {Spectral variational integrators},
journal = {Numerische Mathematik},
volume = {130},
pages = {681--740},
number ={4},
year = {2015}
}

@book {LuGeWa,
    AUTHOR = {Hairer, Ernst and Lubich, Christian and Wanner, Gerhard},
     TITLE = {Geometric numerical integration},
    SERIES = {Springer Series in Computational Mathematics},
    VOLUME = {31},
   EDITION = {Second},
      NOTE = {Structure-preserving algorithms for ordinary differential
              equations},
 PUBLISHER = {Springer-Verlag, Berlin},
      YEAR = {2006},
     PAGES = {xviii+644},
      ISBN = {3-540-30663-3; 978-3-540-30663-4},
}

@book{AbMa,
   author =    {Abraham, R. and  Marsden, J. E.},
   title =     {Foundations of Mechanics},
   publisher = {Benjamin/Cummings Publishing Company},
   year =      {1978}
}

@article{Torvik,
    author = {Torvik, P. J. and Bagley, R. L.},
    title = {On the Appearance of the Fractional Derivative in the Behavior of Real Materials},
    journal = {Journal of Applied Mechanics},
    volume = {51},
    number = {2},
    pages = {294--298},
    year = {1984},
}

@article{Raja,
    author = {Jena, Rajarama Mohan and Chakraverty, S.},
    title = "{Analytical solution of Bagley-Torvik equations using Sumudu transformation method}",
    journal = {SN Applied Sciences},
    volume = {1},
    pages = {246},
    year = {2019},
}

@article{Ford,
author = {Neville J. Ford and Joseph A. Connolly},
title = {Systems-based decomposition schemes for the approximate solution of multi-term fractional differential equations},
journal = {Journal of Computational and Applied Mathematics},
volume = {229},
number = {2},
pages = {382-391},
year = {2009},
}

@book{Ignor,
   title =     {Fractional Differential Equations: An Introduction to Fractional Derivatives, Fractional Differential Equations, to Methods of Their Solution and Some of Their Applications},
   author =    {Ignor Podlubny},
   publisher = {Academic Press},
   year =      {1998},
   series =    {Mathematics in Science and Engineering},
   edition =   {1st},
   number =    {198},
}

@article{Diego,
year = {2018},
publisher = {IOP Publishing},
volume = {31},
number = {8},
pages = {3814},
author = {D Martín de Diego and R Sato Martín de Almagro},
title = {Variational order for forced Lagrangian systems},
journal = {Nonlinearity},
}

@article{Dlhm,
title = {Pitfalls in fast numerical solvers for fractional differential equations},
journal = {Journal of Computational and Applied Mathematics},
volume = {186},
number = {2},
pages = {482-503},
year = {2006},
author = {Kai Diethelm and Judith M. Ford and Neville J. Ford and Marc Weilbeer},
}

\end{document}